\documentclass[secthm,seceqn,amsthm,ussrhead,12pt]{amsart}
\usepackage[utf8]{inputenc}
\usepackage[english]{babel}
\usepackage{amssymb,amsmath,amsthm,amsfonts,xcolor,enumerate,hyperref,comment,longtable,cleveref}

\usepackage{times}
\usepackage{cite}
\usepackage{pdflscape}
\usepackage{ulem}
\usepackage[mathcal]{euscript}
\usepackage{tikz}
\usepackage{hyperref}
\usepackage{cancel}
\usepackage{stmaryrd}

\usetikzlibrary{arrows}

\setlength{\topmargin}{-15mm}
\setlength{\textwidth}{235mm}  %!!
\setlength{\textheight}{250mm}
\setlength{\evensidemargin}{20mm}
\setlength{\oddsidemargin}{20mm}

\mathsurround=0pt  \tolerance=1500 \textwidth=18true cm %!!
\hoffset=-25mm %!!

\sloppy
\newtheorem{theorem}{Theorem}
\newtheorem{lemma}[theorem]{Lemma}
\newtheorem{proposition}[theorem]{Proposition}

\newtheorem{definition}[theorem]{Definition}
\newtheorem{corollary}[theorem]{Corollary}

\usepackage{stmaryrd}
\usepackage{xcolor}

\begin{document}

\title{ $n$-Ary generalized Lie-type color algebras admitting a quasi-multiplicative basis}
\thanks{The first and fourth authors acknowledge financial assistance by the Centre for Mathematics of the University of Coimbra -- UID/MAT/00324/2013, funded by the Portuguese Government through FCT/MEC and co-funded by the European Regional Development Fund through the Partnership Agreement PT2020. Second and fourth authors are supported by the PCI of the UCA `Teor\'\i a de Lie y Teor\'\i a de Espacios de Banach', by the PAI with project numbers FQM298, FQM7156 and by the project of the Spanish Ministerio de Educaci\'on y Ciencia  MTM2016-76327C31P.  
The third author is supported by FAPESP 	17/15437-6. 
The fourth author acknowledges the Funda\c{c}\~{a}o para a Ci\^{e}ncia e a Tecnologia for the grant with reference SFRH/BPD/101675/2014.}

\author[E. Barreiro]{E. Barreiro}
\address{Elisabete Barreiro. CMUC, Universidade de Coimbra. Coimbra, Portugal.}
\email{mefb@mat.uc.pt}

\author[A.J. Calder\'on]{A.J. Calder\'on}
\address{A.J. Calder\'on. Departamento de Matem\'aticas, Universidad de C\'adiz. Puerto Real, C\'adiz, Espa\~na.}
\email{ajesus.calderon@uca.es}

\author[I. Kaygorodov]{I. Kaygorodov}
\address{Ivan Kaygorodov. CMCC, Universidade Federal do ABC. Santo Andr\'e, Brasil.}
\email{kaygorodov.ivan@gmail.com}

\author[J.M. S\'anchez]{J.M. S\'anchez}
\address{Jos\'e M. S\'anchez. Departamento de Matem\'aticas, Universidad de C\'adiz. Puerto Real, C\'adiz, Espa\~na.}
\email{txema.sanchez@uca.es}

\begin{abstract}
The class of generalized Lie-type color algebras contains the ones of generalized Lie-type  algebras, of $n$-Lie algebras and superalgebras, commutative Leibniz $n$-ary algebras and superalgebras, among others. 
We focus on the class of generalized Lie-type color algebras $\frak L$ admitting a quasi-multiplicative basis, with restrictions neither on the dimensions nor on the base field $\mathbb F$ and study its structure. If we write  $\frak L = \mathbb V \oplus \mathbb W$ with $\mathbb V$ and $0 \neq \mathbb W$ linear subspaces, 
we say that a basis  of homogeneous elements $\mathfrak{B} = \{e_i\}_{i \in I}$ of $\mathbb W$ is quasi-multiplicative if given $0 < k < n,$ for $i_1,\dots,i_k \in I$ and $\sigma \in \mathbb S_n$ satisfies $\langle e_{i_1}, \dots, e_{i_k}, \mathbb V, \dots, \mathbb V \rangle_{\sigma} \subset \mathbb{F}e_{j_{\sigma}}$ for some $j_{\sigma} \in I;$ the product of elements of the basis $\langle e_{i_1}, \dots, e_{i_n} \rangle$ belongs to $\mathbb{F}e_j$ for some $j \in I$ or to $\mathbb V$, and a similar condition is verified for the product $\langle \mathbb V, \dots,\mathbb V \rangle$. We state that if $\frak L$ admits a quasi-multiplicative basis then it decomposes as 
$\mathfrak{L} ={\mathcal U} \oplus (\sum\limits {\frak J}_{k})$ with  any ${\frak J}_k$ a well described color gLt-ideal of $\frak L$ admitting also a quasi-multiplicative basis, and ${\mathcal U}$ a linear subspace of $\mathbb V$.
 Also the minimality of $\frak L$ is characterized in terms of the connections and it is shown that the above direct sum is by means of the family of its minimal color gLt-ideals, admitting each one a $\mu$-quasi-multiplicative basis inherited by the one of $\frak L$.
 
 \
 
{\it Keywords}: Generalize Lie-type algebra, $n$-Lie algebra, $n$-Leibniz algebra, superalgebra, color algebra, quasi-multiplicative basis, structure theory.

\

{\it MSC2010}:  15A69,17B05, 08A05. 

\medskip
\end{abstract}

\maketitle

\

\ 

%%%%%%%%%%%%%%%%%%%%%%%%%%%%%%%%%%%%%%%%%%%%%%%%%%%%%%%%%%%%%
%%%%%%%%%%%%%%%%%%%%%%%%%%%%%%%%%%%%%%%%%%%%%%%%%%%%%%%%%%%%%
\section{Introduction}
%%%%%%%%%%%%%%%%%%%%%%%%%%%%%%%%%%%%%%%%%%%%%%%%%%%%%%%%%%%%%
%%%%%%%%%%%%%%%%%%%%%%%%%%%%%%%%%%%%%%%%%%%%%%%%%%%%%%%%%%%%%

 The concept of multiplicative bases appears in a natural way in the study of different physical problems. In fact, one may expect that there are problems which are naturally and more simply formulated exploiting multiplicative bases. 
 There are many works about a study of algebras with a multiplicative basis\cite{m7,kmod,m5,vitya00,vitya03,bgs11,bgs15,Yo_Arb_Alg,Yo_modules,Yo_Arb_Triple,Yo_n_algebras,m1,m6,m3,poj1,poj3,bai1,bai2}.
 
 The interest in the study of  associative algebras admitting a quasi-multiplicative basis also comes from the viewpoint of the theory of infinite dimensional Lie algebras. Since we can recover many classes of Lie algebras from an associative algebra with involution, (see for instance  \cite[Section 6]{Kochetov}), it is interesting to know the structure of the initial associative algebra to understand the one of the Lie algebra. In the framework of infinite dimensional Lie algebras we find many classes of Lie algebras which admits, in  a  natural way, a quasi-multiplicative basis. For instance, we have the semisimple separable $L^*$-algebras, the semisimple locally finite split Lie algebras over a  field of characteristic zero and the graded Lie algebras considered in \cite[Section 3]{Yo4}. Taking into account these comments, it seems natural to study associative and non-associative algebras with a quasi-multiplicative basis to a better understanding of these classes of Lie algebras. 
 The first attempt of a study of algebras with a quasi-multiplicative basis was given in \cite{cuasi}. 

 This interest leads us  in a natural way to study Lie algebras admitting quasi-multiplicative basis. But instead of study this class of algebras, we are going to extend our framework in two different ways, on the one hand we will consider the widest $n$-ary extension by dealing with generalized Lie-type  algebras. On the other hand we will consider the colored version of this category of algebras, that is, the generalized Lie-type color algebras. Hence, our aim is to study  generalized Lie-type color algebras admitting quasi-multiplicative bases.

The paper is organized as follows. 
In the section \ref{decomposition}  we introduce relations techniques on the set of indexes $I$ of the quasi-multiplicative basis  so as to become a powerful  tool for the study of this class of algebras. By making use of these techniques we show that any  generalized Lie-type color algebra $\frak{L}$ admitting a  quasi-multiplicative basis is of the form $\frak{L} ={\mathcal U} \oplus (\sum\limits {\frak J}_{k})$
with  any ${\frak J}_k$ a well described color gLt-ideal of $\frak L$ admitting also a quasi-multiplicative basis, and ${\mathcal U}$ a linear subspace of $\mathbb V$. 
In the section \ref{minimal} the minimality of $\frak L$ is characterized in terms of the quasi-multiplicative basis and it is shown that, under mild conditions, the above decomposition of $\frak L$ is actually the direct sum of the family of its minimal color gLt-ideals. 

\medskip

%%%%%%%%%%%%%%%%%%%%%%%%%%%%%%%%%%%%%%%%%%%%%%%%%%%%%%%%%%%%%
%%%%%%%%%%%%%%%%%%%%%%%%%%%%%%%%%%%%%%%%%%%%%%%%%%%%%%%%%%%%%
\section{Previous definitions}
%%%%%%%%%%%%%%%%%%%%%%%%%%%%%%%%%%%%%%%%%%%%%%%%%%%%%%%%%%%%%
%%%%%%%%%%%%%%%%%%%%%%%%%%%%%%%%%%%%%%%%%%%%%%%%%%%%%%%%%%%%%

%%%%%%%%%%%%%%%%%%%%%%%%%%%%%%%%%%%%%%%%%%%%%%%%%%%%%
%%%%%%%%%%%%%%%%%%%%%%%%%%%%%%%%%%%%%%%%%%%%%%%%%%%%%%%%%%

\subsection{Generalized Lie-type algebras}

Let us introduce now the notion of generalized Lie-type algebras which extends some well-known classes of algebras.

%We denote by $\mathbb S_n$ the permutation group of $n$ elements.

\begin{definition}\rm
An $n$-ary algebra $(\frak L, \langle \cdot, \overset{n)}{\dots}, \cdot \rangle)$ is called a {\it generalized Lie-type algebra} (or shortly {\it gLt-algebra}) if it satisfies  the following $n$ identities:
\begin{eqnarray}\label{gLt_condition}
\begin{split}
& \langle y_1,  \ldots, \underbrace{\langle x_1, \dots, x_n \rangle}_{\mbox{pos } k} , \dots, y_{n-1} \rangle =\\
& \quad \quad \quad \sum\limits_{{\tiny
\begin{array}{l}
  1 \leq i,j \leq n \\
  \sigma_1 \in \mathbb S_n\\
  \sigma_2 \in \mathbb S_{n-1} \\
\end{array}}}
\alpha_{i,j,k}^{\sigma_1,\sigma_2} \langle x_{\sigma_1(1)}, \dots, x_{\sigma_1(i-1)}, \langle y_{\sigma_2(1)}, \dots, \underbrace{x_{\sigma_1(i)}}_{\mbox{pos } j}, \dots, y_{\sigma_2(n-1)} \rangle, x_{\sigma_1(i+1)}, \dots, x_{\sigma_1(n)} \rangle,
\end{split}
\end{eqnarray}
for $k=1, \ldots, n,$  $x_1,\dots,x_n,y_1,\dots,y_{n-1} \in L$, being $\alpha_{i,j,k}^{\sigma_1,\sigma_2} \in \mathbb{F}$, and where \mbox{pos} $j$ means that the element $x_{\sigma_1(i)}$ is in the position $j$ in the inside $n$-product.
\end{definition}

\noindent Observe that depending of the values of $\alpha_{i,j,k}^{\sigma_1,\sigma_2}$ we obtain several binary algebras:
\begin{enumerate}
\item[$\bullet$] Lie algebras, Leibniz algebras, Novikov algebras,
associative algebras,  alternative algebras, bicommutative algebras, commutative pre-Lie algebras,  etc.;
\end{enumerate}
and several $n$-ary algebras:
\begin{enumerate}
\item[$\bullet$] $n$-Lie (Filippov) algebras, commutative Leibniz $n$-ary algebras,
totally associative-commutative $n$-ary algebras, etc.
\end{enumerate}

\subsection{Color $\Omega$-algebras}
In this subsection we discuss about color $n$-ary algebras and color $\Omega$-algebras.
In the end of the subsection we give some defintions of classical color algebras.

\begin{definition}\rm
Let $\mathbb{G}$ be an abelian group. A {\it graded $n$-ary algebra} $(\frak L, \langle \cdot, \overset{n)}{\dots}, \cdot \rangle)$ is a $\mathbb{G}$-graded vector space $\frak L = \oplus_{g \in \mathbb{G}}\frak L_g$ provided with a graded $n$-linear map 
$\langle \cdot,\overset{n)}{\dots},\cdot \rangle : \frak L \times \cdots \times \frak L \to \frak  L$ satisfying
$$\langle \frak L_{g_1}, \dots, \frak L_{g_n} \rangle%_{\sigma}
\subset \frak L_{g_1 + \cdots + g_n},$$
\end{definition}
\noindent for $g_1,\dots,g_n \in \mathbb{G}.$

\begin{definition}\rm
Let $\mathbb{F}$ be a field and $\mathbb{G}$ an abelian group. A map $\epsilon : \mathbb{G} \times \mathbb{G} \to \mathbb{F} \setminus \{0\}$ is called a {\it bicharacter on} $\mathbb{G}$ if it satisfies:
\begin{enumerate}
\item[1.] $\epsilon(k, g+h) = \epsilon(k,g)\epsilon(k, h)$,
\item[2.] $\epsilon(g+h,k) = \epsilon(g,k)\epsilon(h,k)$,
\item[3.] $\epsilon(g,h)\epsilon(h,g) = 1$.
\end{enumerate}
for all $g, h, k \in \mathbb{G}.$
\end{definition}

Let $\frak L = \oplus_{g \in \mathbb{G}}\frak L_g$ be a graded  $n$-ary algebra. An element $x$ is called a {\it homogeneous element of degree} $g$ if $x \in\frak  L_g$ and denoted by $\deg(x) = g$. From now on, unless stated otherwise, we assume that all elements are homogeneous. Let $\epsilon$ be a bicharacter of $\mathbb{G}.$ Given two homogeneous elements $x,y \in \frak L$ we set $\epsilon(x,y) := \epsilon(\deg(x),\deg(y))$.
Now we recall the notion of color $n$-ary $\Omega$-algebra for an arbitrary family of polynomial identities $\Omega$ (see \cite{color n-ary,beites18} for more details).

\begin{definition}\rm
For a (possible  $n$-ary) multilinear polynomial $f(x_1,\dots, x_n)$ we fix the order of indexes $\{i_1,\dots,i_n\}$ of one non-associative word
$\langle x_{i_1},\dots,x_{i_n}\rangle _{\beta}$ from the polynomial $f$. Here,
$$f = \sum_{\beta,\sigma \in S_n} \alpha_{\sigma,\beta} \langle x_{\sigma(i_1)},\dots,x_{\sigma(i_n)} \rangle_{\beta},$$ where $\mathbb{S}_n$ is the permutation group of $n$ elements and $\beta$ is an arrangement of brackets in the non-associative word. For the shift $\mu_i : \{j_1,\dots,j_n\} \mapsto \{j_1,\dots,j_{i+1},j_i,\dots,j_n\}$ we define the element $\epsilon(x_{j_i},x_{j_{i+1}}) \in {\mathbb F} \setminus \{0\}.$ Now, for arbitrary non-associative word $\langle x_{\sigma(i_1)},\dots,x_{\sigma(i_n)} \rangle_{\beta}$ its order of indexes is a composition of suitable shifts $\mu_i$, and for this word we set $\epsilon_{\sigma}$ defined as the product of corresponding $\epsilon(x_{j_i}, x_{j_{i+1}})$. Now, for the multilinear polynomial $f$, we define the {\it color multilinear polynomial} $$f_{co} = \sum_{\beta,\sigma \in S_n}\alpha_{\sigma,\beta}\epsilon_{\sigma}\langle x_{\sigma(i_1)},\dots,x_{\sigma(i_n)}\rangle_{\beta}.$$

\noindent Let $\Omega = \{f_i\}$ be a family of $n$-ary multilinear polynomials. We say that a $n$-{\it ary} algebra $\frak L$ is a $\Omega$-{\it algebra} if it satisfies the family of polynomial identities $\Omega = \{f_i\}.$ Also an $n$-{\it ary}  {\it color}  $\Omega$-{\it algebra} is an $n$-ary color algebra $\frak L$ satisfying the family of color multilinear polynomials $\Omega_{co} = \{(f_i)_{co}\}$.
\end{definition}

Some examples of $n$-ary color algebras
are 
Lie and Jordan superalgebras \cite{ivan09,ivan12},
Lie color algebras \cite{mikh85},
Leibniz color algebras \cite{dzhuma},
Filippov ($n$-Lie) superalgebras \cite{poj08patricia,ck10,poj03,poj09,poj08}
and $3$-Lie color algebras \cite{ZT15}. Let us give  definitions of some color algebras.

\begin{definition}\rm
\label{colorliebin}
{A Leibniz color algebra $(\frak L,[\cdot,\cdot], \epsilon)$ is a $\mathbb{G}$-graded vector space $\frak L=\bigoplus_{g \in \mathbb{G}} \frak L_g$ with a bicharacter $\epsilon$, an even bilinear map $[\cdot,\cdot]: \frak L \times \frak L \rightarrow\frak  L$  satisfying
$$[x,[y,z]] = [[x,y],z]+ \epsilon(x,y)[y,[x,z]].$$
}\end{definition}

\begin{definition}\rm
An  $n$-Lie color algebra $(\frak L, [\cdot, \ldots, \cdot], \epsilon)$ is a $\mathbb{G}$-graded vector space  $\frak L=\bigoplus_{g \in \mathbb{G}} \frak L_g$
with an $n$-linear map $[\cdot,\ldots, \cdot]: \frak L \times \ldots \times \frak L \rightarrow \frak L$ satisfying
$$[x_1, \ldots, x_i, x_{i+1}, \ldots, x_n]=- \epsilon( x_i, x_{i+1}) [x_1, \ldots, x_{i+1}, x_i, \ldots, x_n],$$
$$[x_1, \ldots, x_{n-1}, [y_1, \ldots, y_n]] =\sum_{i=1}^n \epsilon( X_{n-1}, Y_{i-1}) [y_1, \ldots,  y_{i-1},[x_1,\ldots, x_{n-1},y_i], y_{i+1},\ldots, y_n],$$
where $X_{i-1}=\sum\limits_{k=1}^{i-1} x_k, \ Y_{i-1}=\sum\limits_{k=1}^{i-1} y_k.$
\end{definition}

\begin{definition}\rm
 For any $\sigma$ from the permutation group of $n$ elements $ \mathbb{S}_n,$ we use the notation 
 $$\langle x_1,\dots,x_j,\dots,x_n \rangle_{\sigma} = \langle x_{\sigma(1)},\dots,x_{\sigma(j)},\dots,x_{\sigma(j)} \rangle.$$
%to mean that the element $x_j$ is placed in position $\sigma(j)$ in the $n$-linear product.
\end{definition}

\subsection{Multiplicative and quasi-multiplicative basis}
Firstly we establish the natural definition of multiplicative basis of graded $n$-ary algebras.

\begin{definition}\label{11}\rm
A basis of homogeneous elements $\mathfrak{B} = \{e_i\}_{i \in I}$ of a graded $n$-ary algebra $\frak L$ is {\it multiplicative} if for any $i_1,\dots,i_n \in I$ we have $\langle e_{i_1},\dots,e_{i_n} \rangle \in \mathbb{F}e_j$ for some $j \in I$.
\end{definition}

\noindent In particular, this definition extends the one considered in \cite{Yo_Arb_Triple,Yo_n_algebras}. Also this definition is more general than the usual one in the literature \cite{m1,m6,m3}. In fact, in these references, a basis ${\mathfrak B} = \{e_i\}_{i \in I}$ is {\it multiplicative} if for any $i,j \in I$ we have either $e_ie_j=0$ or $0 \neq e_ie_j = e_k$ for some $k \in I$.

We wish to go a further step by introducing a more general concept than the one of  multiplicative basis as follows.

\begin{definition}\label{2}\rm
A graded $n$-ary algebra $\frak L$ admits a {\it quasi-multiplicative basis} if $\frak L = \mathbb V \oplus \mathbb W$ with $\mathbb V$ and $0 \neq \mathbb W$ graded linear subspaces in such a way that there exists a basis of homogeneous elements $\mathfrak{B} = \{e_i\}_{i \in I}$ of $\mathbb W$ satisfying:
\begin{enumerate}
\item[1.] For $i_1,\dots,i_n \in I$ we have either $\langle e_{i_1}, \dots, e_{i_n} \rangle \in \mathbb{F}e_j$ for some $j \in I$ or $\langle e_{i_1},\dots, e_{i_n} \rangle \in \mathbb V$.
\item[2.] Given $0 < k < n,$ for $i_1,\dots,i_k \in I$ and $\sigma \in \mathbb S_n$ we have $\langle e_{i_1}, \dots, e_{i_k}, \mathbb V, \dots, \mathbb V \rangle_{\sigma} \subset \mathbb{F}e_{j_{\sigma}}$ for some $j_{\sigma} \in I$.
\item[3.] We have either $\langle \mathbb V, \dots, \mathbb V \rangle \subset \mathbb{F}e_j$ for some $j \in I$ or $\langle \mathbb V, \dots,\mathbb V \rangle \subset\mathbb V$.
\end{enumerate}
\end{definition}

\noindent Observe that in item 2. we only consider $0 < k < n$ because $k=n$ is contemplated in item 1. and $k=0$ in item 3. We also note that if the linear subspace $\mathbb V$ is trivial or $1$-dimensional this definition agrees with the one of multiplicative basis.

We note that a different concept of quasi-multiplicative basis to the one given in Definition \ref{2} can be found, in a context of categories theory, in the reference \cite{m7}.

Examples of gLt-algebras admitting a quasi-multiplicative basis are any $n$-ary algebras admitting a multiplicative basis (case $\mathbb V = \{0\}$).  We also have that any finite-dimensional associative algebra $A$ of finite representation type (that is, there are only finitely many isomorphism classes of indecomposable finite-dimensional $A$-modules) has also a multiplicative basis \cite{m5,m3}.  Multiplicative bases are also well-related to Gr${\rm \ddot{o}}$bner basis. In fact, it is well-known that an algebra with a multiplicative basis have a Gr${\rm \ddot{o}}$bner basis theory if there is an admissible order on the basis \cite{m6}.

%%%%%%%%%%%%%%%%%%%%%%%%%%%%%%%%%%%%%%%%%%%%%%%%%%%%%%%%
%%%%%%%%%%%%%%%%%%%%%%%%%%%%%%%%%%%%%%%%%%%%%%%%%%%%%%%%%5

%%%%%%%%%%%%%%%%%%%%%%%%%%%%%%%%%%%%%%%%%%%%%%%%%%%%%%%%%%%%%
%%%%%%%%%%%%%%%%%%%%%%%%%%%%%%%%%%%%%%%%%%%%%%%%%%%%%%%%%%%%%
\section{Decomposition as direct sum of ideals}\label{decomposition}
%%%%%%%%%%%%%%%%%%%%%%%%%%%%%%%%%%%%%%%%%%%%%%%%%%%%%%%%%%%%%
%%%%%%%%%%%%%%%%%%%%%%%%%%%%%%%%%%%%%%%%%%%%%%%%%%%%%%%%%%%%%

In what follows $\frak  L = \mathbb V \oplus\mathbb W$ denotes a color gLt-algebra admitting a quasi-multiplicative basis of homogeneous elements ${\mathfrak B}=\{e_i\}_{i \in I}$ of $W \neq 0$. We begin this section by developing connection techniques among the elements in the set of indexes $I$ as a main tool in our study.

Consider $\mathbb V$ an external element to $I$ and define the set $${\frak I} := I \dot{\cup} \{v\}.$$ The element $\mathbb V$ gives us information about the behavior of the linear subspace $\mathbb V$ with respect to the elements in the basis ${\mathfrak B}$. For each $j \in {\frak I}$, a new assistant variable $\overline{j} \notin {\frak I}$ is introduced and we consider the set $\overline{I} := \{\overline{i} : i \in I\},$ so $$\overline{{\frak I}} := \{ \overline{j} : j \in {\frak I} \} = \overline{I} \dot{\cup} \{\overline{v}\}$$ consists of all these new symbols. We also denote by ${\mathcal P}(A)$ the power set of a given set $A$.

\medskip

Next, we consider the following operations which recover, in a sense, certain multiplicative relations among the elements in ${\frak I}$. Given $\sigma \in \mathbb S_n$ we define 

\begin{enumerate}
    \item[$\star$] $a_\sigma : {\frak I} \times \overset{n)}{\ldots} \times {\frak I} \to \mathcal P({\frak I})$ such as

\begin{itemize}
\item $a_\sigma(i_1,\dots,i_n) := \left\{\begin{array}{cll}
%    \emptyset, & \hbox{if $\; \langle e_{i_{\sigma(1)}},\dots,e_{i_{\sigma(n)}} \rangle = 0$;} \\
    \{r\}, & \hbox{if $\; 0 \neq \langle e_{i_{\sigma(1)}},\dots,e_{i_{\sigma(n)}} \rangle \in {\mathbb F}e_r$;} \\
    \{v\}, & \hbox{if $\; 0 \neq \langle e_{i_{\sigma(1)}},\dots,e_{i_{\sigma(n)}} \rangle \in \mathbb V$.} \\
\end{array}\right.$

\medskip

\item For $0 < k < n,$\\ $a_\sigma(i_1,\dots,i_k,v,\dots,v) := \left\{\begin{array}{cl}
 %   \emptyset, & \hbox{if $\; \langle e_{i_1}, \dots, e_{i_k},\mathbb V, \dots, \mathbb V \rangle_{\sigma} = 0$;} \\
    \{r\}, & \hbox{if $\; 0 \neq \langle e_{i_1}, \dots, e_{i_k}, \mathbb V, \dots, \mathbb V \rangle_{\sigma} \subset {\mathbb F}e_r$} \}. \\ 
   \end{array}\right.$

\medskip

\item $a_\sigma(v,\dots,v) := \left\{\begin{array}{cll}
 %   \emptyset, & \hbox{if $\; \langle \mathbb V,\dots,\mathbb V \rangle = 0$;} \\
    \{r\}, & \hbox{if $\; 0 \neq \langle \mathbb V,\dots, \mathbb V \rangle \subset {\mathbb F} e_r$;} \\
    \{v\}, & \hbox{if $\; 0 \neq \langle \mathbb V,\dots, \mathbb V \rangle \subset \mathbb V$.} \\
\end{array}\right.$

\medskip

\item $\emptyset$ in the remaining cases.
\end{itemize}

\item[$\star$]  $b_\sigma : \frak{I} \times \overline{\frak{I}} \times \overset{n-1)}{\ldots} \times \overline{\frak{I}} \to \mathcal{P}(\frak{I})$ such as
\begin{itemize}
\item For $1 \leq k \leq n,$\\ $b_\sigma(i,\overline{i}_2,\dots,\overline{i}_k,\overline{v},\dots,\overline{v}) :=$ 
$$\Bigl\{i' \in I : a_\sigma(i',i_2,\dots,i_k,v,\dots,v) = \{i\} \Bigr\} \cup %\alpha,$ \\  where $\alpha=
\Bigl\{
                                               \begin{array}{ll}
%                                                 \emptyset, & \hbox{if $a_\sigma(i_2,\dots,i_k,v,\dots,v) \neq \{i\}$;} \\
                                                 \{v\}, & \hbox{if $a_\sigma(i_2,\dots,i_k,v,\dots,v) = \{i\}$} \Bigr\}.
                                               \end{array}$$

\medskip

%\item For $2 \leq k \leq n,$\\ $b_\sigma(v,\overline{i}_2,\dots,\overline{i}_k,\overline{v},\dots,\overline{v}) := \Bigl\{i' \in I : a_\sigma(i',i_2,\dots,i_k,v,\dots,v) = \{v\} \Bigr\} .$

%\medskip

\item $b_\sigma(v,\overline{i}_2,\dots,\overline{i}_n) :=  \{i' \in I: a_{\sigma}(i',i_2,...,i_n)=\{v\}$ \}.

\medskip

\item $b_\sigma(v,\overline{v},\dots,\overline{v}) := \{
                                               \begin{array}{ll}
%                                                 \emptyset, & \hbox{if $a_\sigma(v,\dots,v) \neq \{v\}$;} \\
                                                 \{v\}, & \hbox{if $a_\sigma(v,\dots,v) =\{v\}$}  \}. 
                                               \end{array}$

\medskip

\item $\emptyset$ in the remaining cases.
\end{itemize}

\end{enumerate}

\noindent Then, we consider the operation $$\mu : (\frak{I} \; \dot{\cup} \; \overline{\frak{I}}) \times ((\frak{I} \times \overset{n-1)}{\ldots} \times \frak{I})   \dot{\cup} (\overline{\frak{I}} \times \overset{n-1)}{\ldots} \times \overline{\frak{I}})) \to \mathcal{P}(\frak{I})$$ given by:

\medskip

\begin{itemize}
\item For any $j,j_1,...,j_{n-1} \in \frak{I}$, \hspace{0.1cm} $\mu(j,j_1,\dots,j_{n-2},j_{n-1}) := \bigcup\limits_{\sigma \in \mathbb S_n} a_{\sigma}(j,j_1,\dots,j_{n-2},j_{n-1})$

\medskip

\item For any $j \in \frak{I}$ and $\overline{j}_1,...,\overline{j}_{n-1} \in \overline{\frak{I}}$,  \hspace{0.1cm} $\mu(j,\overline{j}_1,\dots,\overline{j}_{n-2},\overline{j}_{n-1}) := \bigcup\limits_{\sigma \in\mathbb S_n} b_{\sigma}(j,\overline{j}_1,\dots,\overline{j}_{n-2},\overline{j}_{n-1})$

\medskip

\item For any $j,j_1,...,j_{n-1} \in \frak{I}$, \hspace{0.1cm} $\mu(\overline{j},{j}_1,\dots,{j}_{n-1}) := \bigcup\limits_{k\in\{1,...,n-1\}, \sigma \in\mathbb S_{n-1}}  b_{\sigma}(j_k, \overline{j},\overline{j}_1,...,\overline{j}_{k-1},\overline{j}_{k},...,\overline{j}_{n-1})$

\medskip

\item  $\mu(\overline{\frak{I}},\overline{\frak{I}},\dots,\overline{\frak{I}}) := \emptyset.$

\end{itemize}

\noindent From now on, given any $\overline{j} \in \overline{\frak{I}}$ we denote $\overline{(\overline{j})} := j.$ Given also any subset ${J}$ of ${\frak I} \dot{\cup} \overline{{\frak I}}$, we write by $\overline{J} := \{\overline{j} : j \in J\}$ if $J \neq \emptyset$ and $\overline{\emptyset}:= \emptyset$.

\begin{lemma}\label{lema1}
Let $i,j \in I$ and elements $a_2,\dots,a_n$ of $(\frak{I} \times \overset{n-1)}{\ldots} \times \frak{I})   \dot{\cup} (\overline{\frak{I}} \times \overset{n-1)}{\ldots} \times \overline{\frak{I}}).$ It holds that $i \in \mu(j,a_2,\dots,a_n)$ if and only if $j \in\mu(i,\overline{a}_2,\dots,\overline{a}_n)$.
\end{lemma}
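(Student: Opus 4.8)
The statement is a symmetry/reciprocity property of the operation $\mu$, and the natural strategy is to unwind the definition of $\mu$ case by case according to where the arguments live (in $\frak I$ or in $\overline{\frak I}$), then check that each defining clause of $a_\sigma$ and $b_\sigma$ is itself symmetric in the appropriate sense. Since $i,j \in I$ are fixed ``bare'' indices, only two of the four clauses in the definition of $\mu$ are relevant: the clause $\mu(j,j_1,\dots,j_{n-1}) = \bigcup_\sigma a_\sigma(j,j_1,\dots,j_{n-1})$ (when the $a_k$ all lie in $\frak I$), and the clause $\mu(j,\overline{j}_1,\dots,\overline{j}_{n-1}) = \bigcup_\sigma b_\sigma(j,\overline{j}_1,\dots,\overline{j}_{n-1})$ (when the $a_k$ all lie in $\overline{\frak I}$); the bar operation $\overline{(\cdot)}$ swaps these two clauses, so the two cases of the proof are genuinely dual to each other. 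The key observation linking them is that, by the very definition of $b_\sigma$, the relation ``$i \in b_\sigma(j,\overline{i}_2,\dots,\overline{i}_n)$'' is \emph{defined} to mean ``$a_\sigma(i,i_2,\dots,i_n) = \{j\}$'' (reading the $v$-entries appropriately), so $b_\sigma$ is nothing but the ``partial inverse'' of $a_\sigma$ in the first slot.

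\textbf{Case 1: $a_2,\dots,a_n \in \frak I$.} Here I would argue: $i \in \mu(j,a_2,\dots,a_n)$ iff $i \in a_\sigma(j,a_2,\dots,a_n)$ for some $\sigma \in \mathbb S_n$. Because $a_\sigma(j,a_2,\dots,a_n)$ is a singleton whenever it is nonempty, $i \in a_\sigma(j,a_2,\dots,a_n)$ means exactly $a_\sigma(j,a_2,\dots,a_n) = \{i\}$. By the definition of $b_\sigma$ (the clauses $b_\sigma(i,\overline{i}_2,\dots,\overline{i}_k,\overline{v},\dots,\overline{v})$ and $b_\sigma(v,\overline{i}_2,\dots,\overline{i}_n)$, together with the ``$\emptyset$ otherwise'' convention), this is equivalent to $j \in b_\sigma(i,\overline{a}_2,\dots,\overline{a}_n)$. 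Here one must be slightly careful to split on whether $j \in I$ or $j = v$: if $j \in I$ one lands in the first family of clauses of $b_\sigma$, and if $j = v$ (which can occur when, e.g., $\langle e_i,\dots\rangle \in \mathbb V$) one lands in the clause $b_\sigma(v,\overline{i}_2,\dots,\overline{i}_n)$; but in our statement $j \in I$ is given, so only the first family is needed, while $v$ may still appear among the $a_k$'s, which is exactly why the $\{v\}$-terms in the definition of $b_\sigma$ are present. Taking the union over $\sigma$ gives $j \in \mu(i,\overline{a}_2,\dots,\overline{a}_n)$, and since $\overline{(\overline{a}_k)} = a_k$ the argument is symmetric, giving the ``if and only if''.

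\textbf{Case 2: $a_2,\dots,a_n \in \overline{\frak I}$.} Write $a_k = \overline{b}_k$ with $b_k \in \frak I$. Now $i \in \mu(j,\overline{b}_2,\dots,\overline{b}_n)$ iff $i \in b_\sigma(j,\overline{b}_2,\dots,\overline{b}_n)$ for some $\sigma$, which by definition of $b_\sigma$ means $a_\sigma(i,b_2,\dots,b_n) = \{j\}$ (again splitting on $j \in I$ versus the $v$-cases, but here $j \in I$). On the other side, $\overline{a}_k = b_k \in \frak I$, so $j \in \mu(i,\overline{a}_2,\dots,\overline{a}_n) = \mu(i,b_2,\dots,b_n)$ iff $j \in a_\sigma(i,b_2,\dots,b_n)$ for some $\sigma$, i.e. $a_\sigma(i,b_2,\dots,b_n) = \{j\}$. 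These are the same condition, so again we get equivalence after taking the union over $\sigma$. (The mixed cases, where some $a_k \in \frak I$ and others in $\overline{\frak I}$, do not arise because the domain of $\mu$ only allows tuples that are entirely barred or entirely unbarred, and the same holds for $\overline{a}_2,\dots,\overline{a}_n$.)

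\textbf{Main obstacle.} The conceptual content is short, so the real work is bookkeeping: matching up the several sub-clauses in the definitions of $a_\sigma$ and $b_\sigma$ (products landing in $\mathbb F e_r$ versus in $\mathbb V$, the position of the $v$-entries, and the convention $\overline{v} \in \overline{\frak I}$) and making sure the $v$-valued clauses of $b_\sigma$ exactly account for the $\{v\}$ outputs of $a_\sigma$. I expect the fussiest point to be verifying that the ``singleton or empty'' property of $a_\sigma$ legitimizes replacing ``$i \in a_\sigma(\dots)$'' by ``$a_\sigma(\dots) = \{i\}$'', and then that the third clause of $\mu$ (the one with a leading barred argument, $\mu(\overline{j},j_1,\dots,j_{n-1})$) is genuinely irrelevant here because our left-hand argument $j$ and right-hand argument $i$ are both unbarred bare indices, so the only clauses in play are the first two.
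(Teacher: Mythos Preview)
Your proof is correct and follows essentially the same approach as the paper: both split into the two cases $a_2,\dots,a_n \in \frak I$ versus $a_2,\dots,a_n \in \overline{\frak I}$, and both reduce the claim to the built-in duality that $b_\sigma$ is defined as the partial inverse of $a_\sigma$ in the first slot. Your write-up is simply more explicit about the bookkeeping (the singleton-or-empty property of $a_\sigma$, the handling of $v$-entries, and the irrelevance of the third and fourth clauses of $\mu$), but the underlying argument is the same.
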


\begin{proof}
First let us suppose $i \in \mu(j,a_2,\dots,a_n)$. If $\{a_2,\dots,a_n\} \subset \frak{I}$, then  there exists $\sigma \in \mathbb S_n$ such that $\{i\} = a_{\sigma}(j,a_2,\dots,a_n).$ Then $$j \in b_{\sigma}(i,\overline{a}_2,\dots,\overline{a}_n) \subset \mu(i,\overline{a}_2,\dots,\overline{a}_n).$$

\noindent In the another case, if $\{a_2,\dots,a_n\} \subset \overline{\frak{I}}$, then exists $\sigma \in \mathbb S_n$ such that $i \in b_{\sigma}(j,{a_2},\dots,{a_n})$ and so $$\{j\} = a_{\sigma}(i, \overline{a}_2, \dots,\overline{a}_n) \subset \mu(i,\overline{a}_2,\dots,\overline{a}_n).$$

\noindent To prove the converse we can argue in a similar way.
\end{proof}

\begin{lemma}\label{lema111}
 Let $i \in I$, $j \in \overline{I}$ and elements $a_1,\dots,a_{n-1}$ of $(\frak{I} \times \overset{n-1)}{\ldots} \times \frak{I})   \dot{\cup} (\overline{\frak{I}} \times \overset{n-1)}{\ldots} \times \overline{\frak{I}}).$ It holds that $i \in \mu(j,a_1,\dots,a_{n-1})$ if and only if
$\overline{j} \in \mu(\overline{i}, a_1, \dots, a_{n-1})$.

\end{lemma}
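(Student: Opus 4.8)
The plan is to mimic the structure of the proof of Lemma \ref{lema1}, but now tracking how the barred/unbarred status of the first argument interacts with the barred/unbarred status of the remaining arguments $a_1,\dots,a_{n-1}$. Recall that $j \in \overline{I}$, so $\overline{j} \in I$, and write $j = \overline{p}$ with $p \in I$. The statement $i \in \mu(j,a_1,\dots,a_{n-1}) = \mu(\overline{p}, a_1, \dots, a_{n-1})$ forces, by the definition of $\mu$, that the tail $(a_1,\dots,a_{n-1})$ lies in $\frak{I}\times\overset{n-1)}{\ldots}\times\frak{I}$ (the case of barred tail gives $\mu(\overline{\frak{I}},\overline{\frak{I}},\dots,\overline{\frak{I}}) = \emptyset$), so $\mu(\overline{p},a_1,\dots,a_{n-1}) = \bigcup_{k, \sigma} b_\sigma(a_k, \overline{p}, \overline{a}_1,\dots,\widehat{\overline{a}_k},\dots,\overline{a}_{n-1})$ where the hat denotes omission. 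Hence $i \in \mu(\overline{p},a_1,\dots,a_{n-1})$ means there exist $k \in \{1,\dots,n-1\}$ and $\sigma \in \mathbb{S}_{n-1}$ with $i \in b_\sigma(a_k, \overline{p}, \overline{a}_1,\dots,\widehat{\overline{a}_k},\dots,\overline{a}_{n-1})$.

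Next I would unfold the definition of $b_\sigma$ on that tuple. Its arguments are one element of $\frak{I}$ (namely $a_k$) followed by $n-1$ elements of $\overline{\frak{I}}$; by the defining clauses of $b_\sigma$, membership $i \in b_\sigma(a_k, \overline{p},\overline{a}_1,\dots)$ translates into an equation of the form $a_\tau(i, p, a_1,\dots) = \{a_k\}$ (for the appropriate reshuffling $\tau$ coming from $\sigma$), or, in the degenerate subcases, into $a_\tau(p, a_1, \dots) = \{a_k\}$ with $i = v$ — but here $i \in I$, so only the first possibility survives. This is exactly the condition appearing in the expansion of $\mu(\overline{a}_k, i, p, \overline{a}_1,\dots)$ hmm — wait, I need to be careful: I want to land on $\overline{j} \in \mu(\overline{i}, a_1,\dots,a_{n-1})$, i.e. $p \in \mu(\overline{i}, a_1,\dots,a_{n-1})$. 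So I should read the equation $a_\tau(i,p,a_1,\dots)=\{a_k\}$ instead as a $b$-relation witnessing $p \in \mu(\overline{i}, a_1,\dots,a_{n-1}) = \bigcup_{\ell,\rho} b_\rho(a_\ell, \overline{i}, \overline{a}_1,\dots)$: indeed $a_\tau(i, p, a_1,\dots,\widehat{a}_k,\dots) = \{a_k\}$ is precisely the statement that $p \in b_{\tau'}(a_k, \overline{i}, \overline{a}_1,\dots,\widehat{\overline{a}_k},\dots)$ for the permutation $\tau'$ obtained from $\tau$ by transposing the first two slots. So the key bookkeeping step is: the single color-free equation $a_\tau(i,p,a_1,\dots) = \{a_k\}$ simultaneously witnesses $i \in \mu(\overline{p},a_1,\dots,a_{n-1})$ (slot $i$ read off from the first argument via $b$) and $p \in \mu(\overline{i},a_1,\dots,a_{n-1})$ (slot $p$ read off symmetrically), because the operation $a_\tau$ is symmetric in how it reports its output and the two "frozen" inputs $i,p$ enter on equal footing after one transposition. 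The converse is obtained by the same chain of equivalences read in reverse, exchanging the roles of $i$ and $p = \overline{j}$.

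The main obstacle I expect is purely combinatorial: correctly matching up the permutations. When one moves $i$ from "the distinguished output slot" of a $b_\sigma$-term into "an input slot" of an $a_\tau$-term and then back out as the output of a different $b$-term, the permutation indices shift, and one must check that every $\sigma \in \mathbb{S}_{n-1}$ (respectively $k$) arising on the left corresponds to some $\rho \in \mathbb{S}_{n-1}$ (respectively $\ell$) on the right and vice versa — i.e. that the two unions over $(k,\sigma)$ really coincide and nothing is lost. I would handle this by treating the transposition of the first two entries explicitly (as already done implicitly in Lemma \ref{lema1}, where $b_\sigma(i,\overline{a}_2,\dots) \subset \mu(i,\overline{a}_2,\dots)$ was used for exactly one $\sigma$), and by checking the three clause-types of $b_\sigma$ (the "$1\le k\le n$ with $\overline{v}$'s" clause, the "$b_\sigma(v,\overline{i}_2,\dots,\overline{i}_n)$" clause, and the "$b_\sigma(v,\overline{v},\dots,\overline{v})$" clause) separately, noting that since $i \in I$ and $\overline{j} = p \in I$ the all-$v$ degenerate clauses never contribute. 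With those reductions in place, the statement follows from Lemma \ref{lema1} applied after the bar-swap, or more directly by the symmetric unfolding just described; the write-up is short once the indexing convention is fixed.
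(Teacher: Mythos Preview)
Your proposal is correct and follows essentially the same approach as the paper's proof: reduce to the case $a_1,\dots,a_{n-1}\in\frak{I}$ (since the barred tail gives $\emptyset$), unfold the definition of $\mu$ on a barred first argument to land in some $b_\sigma(a_k,j,\overline a_1,\dots)$, translate this into the equation $a_\sigma(i,\overline j,a_1,\dots)=\{a_k\}$, then swap the first two slots via the transposition $(1,2)$ and refold as $\overline j\in b_{(1,2)\sigma}(a_k,\overline i,\overline a_1,\dots)\subset\mu(\overline i,a_1,\dots,a_{n-1})$. The paper carries out exactly this computation in three lines, writing $\nu=(1,2)\sigma$ for your $\tau'$; your extra care about the degenerate $v$-clauses of $b_\sigma$ not contributing (since $i,\overline j\in I$) is a correct observation that the paper leaves implicit.
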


\begin{proof}
Suppose $i \in \mu(j,a_1,\dots,a_{n-1})$.
Since $\mu(\overline{\frak{I}},\overline{\frak{I}},\dots,\overline{\frak{I}}) := \emptyset,$ we just have to consider   the case in which  $\{a_1,\dots,a_{n-1}\} \subset \frak{I}$. Then there exist $k \in \{1,...,n-1\}$ and $\sigma \in\mathbb S_{n-1}$ such that $ i \in  b_{\sigma}(a_k,{j},\overline{a}_1,..,\overline{a}_{k-1},\overline{a}_{k+1},\dots,\overline{a}_{n-1}).$
%=b_{\mu}(\overline{j},a_k,\overline{a}_1,..,\overline{a}_{k-1},\overline{a}_{k+1},\dots,\overline{a}_{n-1})$$ where $\mu= (1,2) \sigma$.
From here, $$\{a_k\} = a_{\sigma}(i,\overline{j},a_1,\dots,a_{k-1},a_k,\dots,a_{n-1})=a_{\nu}(\overline{j},i,a_1,\dots,a_{k-1},a_k,\dots,a_{n-1})$$ where $\nu= (1,2) \sigma$.  Hence
$ \overline{j} \in  b_{\nu}(a_k,\overline{i},\overline{a}_1,..,\overline{a}_{k-1},\overline{a}_{k+1},\dots,\overline{a}_{n-1}) \subset  \mu(\overline{i}, a_1, \dots, a_{n-1}).$

\noindent The converse is proved similarly.

\end{proof}

\medskip

The map $\mu$ is not adequate for our purposes in the sense that we require to send a subset of $I \dot\cup \overline{I}$ in a subset of the same union. So we need to introduce the following map:
$$\phi: {\mathcal P}(I \dot\cup \overline{I}) \times ((\frak{I} \times \overset{n-1)}{\ldots} \times \frak{I})   \dot{\cup} (\overline{\frak{I}} \times \overset{n-1)}{\ldots} \times \overline{\frak{I}})) \to {\mathcal P}(I \dot\cup \overline{I}),$$ as
\begin{itemize}
\item $\phi(\emptyset, (\frak{I} \times \overset{n-1)}{\ldots} \times \frak{I})   \dot{\cup} (\overline{\frak{I}} \times \overset{n-1)}{\ldots} \times \overline{\frak{I}})) := \emptyset$,

\item For any $\emptyset \neq J \in {\mathcal P} (I \dot\cup \overline{I})$ and $a_2,\dots,a_n \in (\frak{I} \times \overset{n-1)}{\ldots} \times \frak{I})   \dot{\cup} (\overline{\frak{I}} \times \overset{n-1)}{\ldots} \times \overline{\frak{I}})$,
 $$\phi({J}, a_2,\dots,a_n) := \Bigl(\bigl(\bigcup\limits_{j \in J} \mu(j, a_2, \dots, a_n)\bigr) \setminus \{v\} \Bigr) \cup \overline{\Bigl(\bigl(\bigcup\limits_{j \in J} \mu(j, a_2, \dots, a_n)\bigr) \setminus \{v\} \Bigr)}.$$
\end{itemize}

\medskip

\noindent Note that for any $J \in {\mathcal P}(I \dot\cup \overline{I})$ and $a_2,\dots,a_n \in (\frak{I} \times \overset{n-1)}{\ldots} \times \frak{I})   \dot{\cup} (\overline{\frak{I}} \times \overset{n-1)}{\ldots} \times \overline{\frak{I}})$ we have that
\begin{equation}\label{bar}
\phi(J, a_2,\dots,a_n) = \overline{\phi(J, a_2,\dots,a_n)}
\end{equation}
and
\begin{equation}\label{barro}
\phi(J, a_2,\dots,a_n) \cap I = \Bigl(\bigcup\limits_{j \in J} \mu(j, a_2, \dots, a_n) \Bigr) \setminus \{v\}.
\end{equation}

\begin{lemma}\label{1}
Consider $J \in {\mathcal P} (I\dot{\cup} \overline{I})$ such that $J = \overline{J},$ $a_2,\dots,a_n \in {\frak I} \dot{\cup} \overline{{\frak I}}$ and $i \in I$. The following statements are equivalent:
\begin{enumerate}
\item[1.] $i \in \phi(J,a_2,\dots,a_n);$
\item[2.] either $\phi(\{i\}, \overline{a}_2, \dots, \overline{a}_n) \cap J \cap I \neq \emptyset$ or  $\phi(\{\overline{i}\}, a_2, \dots,a_n) \cap J \cap I \neq  \emptyset$. 
\end{enumerate}
\end{lemma}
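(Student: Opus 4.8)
The plan is to prove the equivalence by unwinding the definition of $\phi$ on both sides and reducing everything to the symmetry statements already established in Lemmas \ref{lema1} and \ref{lema111}. First I would use the hypothesis $J = \overline{J}$ together with \eqref{barro}, which says $\phi(J,a_2,\dots,a_n)\cap I = \bigl(\bigcup_{j\in J}\mu(j,a_2,\dots,a_n)\bigr)\setminus\{v\}$. So statement 1 is equivalent to the existence of some $j\in J$ with $i\in\mu(j,a_2,\dots,a_n)$ and $i\neq v$ (the latter being automatic since $i\in I$). Since $J=\overline{J}$, such a $j$ lies either in $J\cap I$ or in $J\cap\overline{I}$, and these two cases will be handled separately.

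In the case $j\in J\cap I$, I would apply Lemma \ref{lema1} (if the $a_k$ all lie in $\frak I$) or combine Lemmas \ref{lema1} and \ref{lema111} (mixed case handled via the bar-toggling identity), to convert $i\in\mu(j,a_2,\dots,a_n)$ into $j\in\mu(i,\overline{a}_2,\dots,\overline{a}_n)$. Then $j\in\mu(i,\overline{a}_2,\dots,\overline{a}_n)\setminus\{v\}$, so by \eqref{barro} again, $j\in\phi(\{i\},\overline{a}_2,\dots,\overline{a}_n)\cap I$, and since $j\in J$ this gives $\phi(\{i\},\overline{a}_2,\dots,\overline{a}_n)\cap J\cap I\neq\emptyset$, the first alternative of statement 2. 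In the case $j\in J\cap\overline{I}$, write $j=\overline{i'}$ with $i'\in I$; I would apply Lemma \ref{lema111} to $i\in\mu(\overline{i'},a_2,\dots,a_n)$ to obtain $\overline{i'}\in\mu(\overline{i},\overline{a}_2,\dots,\overline{a}_n)$, hmm — more carefully, I would toggle bars so as to land on $i'\in\mu(i,\dots)$ type statement, giving $i'\in\phi(\{\overline{i}\},a_2,\dots,a_n)\cap I$ and, since $\overline{i'}=j\in J=\overline{J}$ forces $i'\in J$, the second alternative. The converse directions run the same arguments read backwards, again splitting on which of the two alternatives in statement 2 holds and using that $J=\overline{J}$ to move an element of $J\cap I$ to its bar when needed.

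The main obstacle I anticipate is bookkeeping the bar-toggling: the maps $a_\sigma$, $b_\sigma$, $\mu$, and $\phi$ each treat barred and unbarred tuples asymmetrically, so the exact combination of Lemma \ref{lema1} (which toggles the first two slots and simultaneously bars the tail) and Lemma \ref{lema111} (which swaps a barred first slot with the bar of an entry of an unbarred tail) must be applied in the right order and with the right choice of permutation $\nu=(1,2)\sigma$. I would organize the proof into the four sub-cases determined by (i) whether the tail $a_2,\dots,a_n$ is in $\frak I$ or in $\overline{\frak I}$ and (ii) whether the witnessing $j$ is in $I$ or in $\overline{I}$, and check that in each the appropriate earlier lemma applies verbatim. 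A secondary subtlety is the explicit removal of $\{v\}$ in the definition of $\phi$: one must make sure that the witness produced by the lemmas is never $v$, which is guaranteed because Lemmas \ref{lema1} and \ref{lema111} only produce conclusions about indices in $I$ (the statements are quantified over $i,j\in I$), so $v$ never appears as an output there.
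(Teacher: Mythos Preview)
Your proposal is correct and follows essentially the same approach as the paper: reduce statement~1 via \eqref{barro} to the existence of some $j\in J$ with $i\in\mu(j,a_2,\dots,a_n)$, split on whether $j\in I$ or $j\in\overline{I}$, and invoke Lemma~\ref{lema1} (respectively Lemma~\ref{lema111}) to swap the roles of $i$ and $j$. Your four-sub-case organization is slightly over-cautious---Lemma~\ref{lema1} is already stated for tails in $(\frak I^{\,n-1})\,\dot\cup\,(\overline{\frak I}^{\,n-1})$, so no separate ``mixed'' combination of the two lemmas is needed---but the argument is otherwise identical to the paper's.
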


\begin{proof}
It is straightforward to verify that for any $i \in I$ and $a_2,\dots,a_n \in {\frak I} \dot{\cup} \overline{{\frak I}}$ we have that $$i \in \mu(j,a_2,\dots,a_n)$$ for some $j \in I$ if and only if $j \in \mu(i, \overline{a}_2, \dots, \overline{a}_n)$ (see Lemma \ref{lema1}),  while $i \in \mu(j, a_2,\dots, a_n)$ for some $j \in \overline{I}$ if and only if $\overline{j} \in \mu(\overline{i}, a_2,\dots,a_n).$  Since these facts together with Equations \eqref{bar} and \eqref{barro} we conclude the result.
\end{proof}

For an easier comprenhesion we firstly present a shorter notation. Given $m$ a natural number, we denote $X_m := (a_{m,2},\dots,a_{m,n}) \in \frak{I} \;\dot\cup\; \overline{\frak{I}} \times \overset{n-1)}{\dots} \times \frak{I} \;\dot\cup\; \overline{\frak{I}}$. Let us also denote $$\overline{X}_m := (\overline{a}_{m,2},\dots,\overline{a}_{m,n}).$$

\noindent Additionally, for $t \geq 1,$ by $\{X_1,X_2,\dots,X_t\}$ we mean the set of elements $$\{a_{1,2},\dots,a_{1,n},a_{2,2},\dots,a_{2,n},\dots,a_{t,2} \dots, a_{t,n}\}.$$

\noindent Finally, for $\mathfrak{A} \in \mathcal{P}(I \dot\cup \overline{I})$ we denote $\phi(\mathfrak{A},X_m) := \phi(\mathfrak{A},a_{m,2},\dots,a_{m,n}).$

\begin{definition}\label{defco}\rm
Let $i$ and $j$ be distinct elements in $I$. We say that $i$ is {\it connected} to $j$ if there exists a subset $\{X_1,\dots,X_t\} \subset \frak{I}\;\dot\cup\; \overline{\frak{I}},$ for certain $t \geq 1$, such that the following conditions hold:

\begin{enumerate}
\item [{\rm 1.}] $\widetilde{i} \in \{i,\overline{i}\},$

\bigskip

\item [{\rm 2.}] $\phi(\{\widetilde{i}\},X_1) \neq \emptyset$,\\
$\phi(\phi(\{\widetilde{i}\},X_1),X_2) \neq\emptyset$,\\
$\hspace*{2cm} \vdots$\\
$\phi(\phi(\dots\phi(\{\widetilde{i}\},X_1),\dots), X_{t-1}) \neq\emptyset$.

\bigskip

\item [{\rm 3.}] $j \in \phi(\phi(\dots\phi(\{\widetilde{i}\},X_1),\dots),X_t).$
\end{enumerate}

\bigskip

\noindent The subset $\{X_1,\dots,X_t\}$ is a {\it connection} from $i$ to $j$ and by convention $i$ is connected to itself.
\end{definition}

\noindent Our aim is to show that the connection relation is of equivalence. Previously we check the symmetric property.

%\begin{lemma}\label{symetry}
%Let $\{X_1,\dots,X_t\}$ be a connection from $i$ to $j$ for some $t \geq 1$. Then the set $\{\overline{X}_t,\dots,\overline{X}_1\}$ is a connection from $j$ to $i$.
%\end{lemma}

%\begin{proof}
%Let us prove by induction on $t$. For $t = 1$ we have that $j \in \phi(\{i\}, X_1)$. It means that $j \in \mu(i, X_1)$ so, by Lemma \ref{lema1}, $i \in \mu(j, \overline{X}_1) \subset \phi(\{j\},\overline{X}_1)$. Hence $\{\overline{X}_1\}$ is a connection from $j$ to $i$.

%\noindent Let us suppose that the assertion holds for any connection with certain $t$, where $t \geq 1$, and let us show this assertion also holds for any connection $$\{X_1,\dots,X_t,X_{t+1}\}.$$

%\noindent Denoting the set $U := \phi(\phi(\dots\phi(\{i\},X_1)\dots),X_t)$ taking into account the second condition of the Definition \ref{defco} we have that $j \in \phi(U,X_{t+1}).$

%\noindent Then, by Lemma \ref{1}, $\phi(\{j\}, \overline{X}_{t+1}) \cap U \neq \emptyset$ so we can take $h \in U$ such that
%\begin{equation}\label{eqq1}
%h \in \phi(\{j\},\overline{X}_{t+1}).
%\end{equation}

%\noindent From $h \in U$ we have that $\{X_1, \dots, X_t\}$ is a connection from $i$ to $h$, so $\{\overline{X}_m,\dots,\overline{X}_1\}$ connect $h$ with $i$. From here and Equation \eqref{eqq1} we have that
%$$i \in \phi(\phi(\dots\phi(\phi(\{j\},\overline{X}_{t+1}),\overline{X}_t),\dots),\overline{X}_1).$$

%\noindent So $\{\overline{X}_{t+1},\overline{X}_t,\dots,\overline{X}_1\}$ is a connection from $j$ to $i$, which completes the proof.
%\end{proof}

\begin{proposition}\label{proequiv}
The relation $\sim$ in $I$, defined by $i \sim j$ if and only if $i$ is connected to $j$,  is an equivalence relation.
\end{proposition}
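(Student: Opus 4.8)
The plan is to establish the three properties of an equivalence relation — reflexivity, symmetry, transitivity — with symmetry being the only one requiring real work (reflexivity is the convention built into Definition \ref{defco}, and transitivity is a concatenation of connections). For \textbf{reflexivity}, $i\sim i$ holds by the final sentence of Definition \ref{defco}. For \textbf{transitivity}, suppose $i\sim j$ via a connection $\{X_1,\dots,X_t\}$ starting from some $\widetilde i\in\{i,\overline i\}$, and $j\sim l$ via a connection $\{Y_1,\dots,Y_s\}$ starting from some $\widetilde j\in\{j,\overline j\}$. The idea is to concatenate: if $\widetilde j = j$ we can simply juxtapose the two chains, since $j\in\phi(\phi(\cdots\phi(\{\widetilde i\},X_1),\cdots),X_t)$ and then feeding that set (which contains $j$, hence by \eqref{bar} also $\overline j$) into the $Y$-chain reaches $l$; if $\widetilde j=\overline j$ we need the observation that the $\phi$-iterates are symmetric under bars (by \eqref{bar} each $\phi(\cdots)$ equals its own conjugate), so the set reached after the $X$-chain contains $j$ iff it contains $\overline j$, and hence the $Y$-chain can be started from either. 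Thus $\{X_1,\dots,X_t,Y_1,\dots,Y_s\}$ is a connection from $i$ to $l$, provided none of the intermediate sets is empty — which holds because each initial segment is nonempty by hypothesis for the two separate connections.

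The main work is \textbf{symmetry}: assuming $\{X_1,\dots,X_t\}$ is a connection from $i$ to $j$, I must produce a connection from $j$ to $i$. The natural candidate is the reversed chain $\{\overline X_t,\overline X_{t-1},\dots,\overline X_1\}$ with starting symbol $\widetilde j\in\{j,\overline j\}$ chosen appropriately. The engine is Lemma \ref{1}: for $J=\overline J$, one has $i\in\phi(J,a_2,\dots,a_n)$ iff $\phi(\{i\},\overline a_2,\dots,\overline a_n)\cap J\cap I\neq\emptyset$ or $\phi(\{\overline i\},a_2,\dots,a_n)\cap J\cap I\neq\emptyset$ — in other words, membership in an image of $\phi$ can be "reversed" at the cost of barring the arguments and possibly barring the element. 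I would set $J_0:=\{\widetilde i\}\cup\{\overline{\widetilde i}\}=\{i,\overline i\}$ and $J_m:=\phi(J_{m-1},X_m)$ for $m=1,\dots,t$; note each $J_m$ satisfies $J_m=\overline{J_m}$ by \eqref{bar}, and $J_m\neq\emptyset$ for $m<t$ by condition 2 of the connection (strictly, the definition starts from $\{\widetilde i\}$ rather than $\{i,\overline i\}$, but since $\phi(\{\widetilde i\},X_1)$ already satisfies $\phi=\overline{\phi}$, the iterates agree from step $1$ on; I would remark this at the outset, or equivalently work with the symmetrized $\phi$ throughout). Since $j\in J_t$, Lemma \ref{1} applied with $J=J_{t-1}$ gives that $\phi(\{j\},\overline X_t)$ or $\phi(\{\overline j\},\overline X_t)$ meets $J_{t-1}\cap I$; pick $\widetilde j$ accordingly so that $\phi(\{\widetilde j\},\overline X_t)\cap J_{t-1}\neq\emptyset$.

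From there the argument is a downward induction: having found that $\phi(\phi(\cdots\phi(\{\widetilde j\},\overline X_t),\cdots),\overline X_{m+1})$ meets $J_m$, I want to conclude it (after one more step with $\overline X_m$) meets $J_{m-1}$. Pick $i_m\in\big(\phi(\cdots\phi(\{\widetilde j\},\overline X_t)\cdots,\overline X_{m+1})\big)\cap J_m\cap I$ — that this intersection can be taken inside $I$ follows because all these sets are bar-symmetric, so if they meet they meet in $I$. Now $i_m\in J_m=\phi(J_{m-1},X_m)$, and $i_m$ lies in the previous $\phi$-iterate of the reversed chain; applying Lemma \ref{1} to $i_m\in\phi(J_{m-1},X_m)$ shows $\phi(\{i_m\},\overline X_m)$ or $\phi(\{\overline{i_m}\},X_m)\cap J_{m-1}\neq\emptyset$ — the first alternative is what propagates the reversed chain one step further (the second is handled by the bar-symmetry of the reversed iterate, which contains $i_m$ iff it contains $\overline{i_m}$). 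Iterating down to $m=0$ yields that the reversed chain from $\widetilde j$ eventually produces a nonempty intersection with $J_0=\{i,\overline i\}$, i.e. $i$ (or $\overline i$, but then also $i$ by bar-symmetry of the final iterate) lies in $\phi(\phi(\cdots\phi(\{\widetilde j\},\overline X_t),\cdots),\overline X_1)$, and all intermediate iterates are nonempty because they contain the $i_m$'s. Hence $\{\overline X_t,\dots,\overline X_1\}$ is a connection from $j$ to $i$. The one genuine subtlety — and the step I expect to be fiddliest — is the careful bookkeeping of when one must bar an element versus bar the argument tuple when invoking Lemma \ref{1}, together with checking that the "or" in Lemma \ref{1} never leaves us stuck (it does not, precisely because every $\phi$-iterate equals its bar). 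I would phrase the induction hypothesis as "$\phi(\cdots\phi(\{\widetilde j\},\overline X_t)\cdots,\overline X_{m+1})\cap J_m\cap I\neq\emptyset$ and all earlier iterates of the reversed chain are nonempty," which absorbs all the case distinctions cleanly.
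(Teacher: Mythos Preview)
Your overall architecture matches the paper's: reflexivity is by convention, transitivity is concatenation (using \eqref{bar} to pass through $\widetilde j$), and symmetry is a downward induction driven by Lemma~\ref{1}. But there is a concrete error in your symmetry argument: the reversed connection is \emph{not} uniformly $\{\overline X_t,\dots,\overline X_1\}$.

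Look at your inductive step. With $i_m \in R_m \cap J_m \cap I$ (where $R_m$ is your reversed iterate), Lemma~\ref{1} applied to $i_m \in \phi(J_{m-1},X_m)$ gives either
\[
\phi(\{i_m\},\overline X_m)\cap J_{m-1}\cap I \neq \emptyset
\quad\text{or}\quad
\phi(\{\overline{i_m}\},X_m)\cap J_{m-1}\cap I \neq \emptyset.
\]
In the second alternative, bar-symmetry of $R_m$ indeed gives $\overline{i_m}\in R_m$, hence $\phi(\{\overline{i_m}\},X_m)\subset\phi(R_m,X_m)$; but this tells you that $\phi(R_m,X_m)$ meets $J_{m-1}$, not $\phi(R_m,\overline X_m)$. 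There is no reason for $\phi(R_m,X_m)$ and $\phi(R_m,\overline X_m)$ to coincide, even when $R_m=\overline{R_m}$: for $j\in I$ the set $\mu(j,a_2,\dots,a_n)$ is built from the $a_\sigma$'s when the $a_k$ lie in $\frak I$ and from the $b_\sigma$'s when they lie in $\overline{\frak I}$, and these are genuinely different operations. So your claim that the second alternative ``is handled by bar-symmetry'' and that the induction hypothesis with uniform $\overline X_m$'s ``absorbs all the case distinctions cleanly'' is not justified.

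The fix is exactly what the paper does: at each step choose $\widetilde X_m\in\{X_m,\overline X_m\}$ according to which alternative of Lemma~\ref{1} fires, so the reversed connection is $\{\widetilde X_t,\dots,\widetilde X_1\}$ with each $\widetilde X_m$ possibly barred or not. Your induction then goes through verbatim once you weaken the hypothesis to ``$\phi(\cdots\phi(\{\widetilde j\},\widetilde X_t)\cdots,\widetilde X_{m+1})\cap J_m\cap I\neq\emptyset$ for some choice of $\widetilde X_t,\dots,\widetilde X_{m+1}$''.
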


\begin{proof}
The reflexive character of $\sim$ is given by the Definition \ref{defco}. Let us see the symmetric character of $\sim$: If $i \sim j$ then there exists a connection $$\{X_1,\dots,X_t\} \subset {\frak I} \dot{\cup} \overline{{\frak I}}$$ from $i$ to $j$ satisfying conditions in Definition \ref{defco}. In case $t=1$ we have $j \in \phi(\{\widetilde{i}\}, X_1).$ If $\widetilde{i} = i \in I$ then $i \in \phi(\{j\},\overline{X}_1)$. If $\widetilde{i} = \overline{i} \in \overline{I}$  then $i \in \phi(\{\overline{j}\},X_1)$.  So $i \in \phi(\{\widetilde{j}\},\widetilde{X}_1)$ with $(\widetilde{j},\widetilde{X}_1) \in \{(j,\overline{X}_1),(\overline{j},X_1)\},$ that is, $\{\widetilde{X}_1\}$ is a connection from $j$ to $i$.

Suppose $t \geq 2$ and let us show that we can find a set $$\{\widetilde{X}_t, \dots, \widetilde{X}_1\} \subset {\frak I} \dot{\cup} \overline{{\frak I}},$$ where $\widetilde{X}_m \in \{X_m, \overline{X}_m\}$, for $1 \leq m \leq t$, which gives rise to a connection from $j$ to $i$. Indeed, Equation \eqref{bar} shows $$\phi (\dots(\phi(\{\widetilde{i}\}, X_1), \dots),X_{t-1}) = \overline{\phi (\dots (\phi(\{\widetilde{i}\}, X_1), \dots ),X_{t-1})}$$ and so by taking $J := \phi(\dots(\phi(\{\widetilde{i}\}, X_1), \dots ),X_{t-1})$ we have $J \in \mathcal{P}(I \dot\cup \overline{I})$ and $J = \overline{J},$ so we can apply Lemma \ref{1} to the expression $$j \in \phi(\phi (\dots (\phi(\{\widetilde{i}\}, X_1), \dots ),X_{t-1}), X_t)$$
to get  that either $$ \phi(\{j\},\overline{X}_t) \cap \phi(\dots(\phi(\{\widetilde{i}\}, X_1), \dots  ),X_{t-1}) \cap I \neq \emptyset$$ or
 $$ \phi(\{\overline{j}\},X_t) \cap \phi(\dots (\phi(\{\widetilde{i}\}, X_1), \dots ),X_{t-1}) \cap I \neq \emptyset$$  and so  $$\phi(\{\widetilde{j}\},\widetilde{X}_t)  \neq \emptyset$$ with $(\widetilde{j},\widetilde{X}_t) \in
\{(j,\overline{X}_t),(\overline{j},X_t)\}.$

By taking $$k \in \phi(\{\widetilde{j}\},\widetilde{X}_t) \cap \phi(\dots (\phi(\{\widetilde{i}\}, X_1), \dots), X_{t-1}) \cap I,$$ Equation \eqref{bar} and Lemma \ref{1}, the fact $k \in \phi(\cdots (\phi(\{\widetilde{i}\}, X_1), \dots), X_{t-1})$ and $k \in \phi(\{\widetilde{j}\},\widetilde{X}_t)$ imply now either
$$\phi(\phi(\{\widetilde{j}\},\widetilde{X}_t),\overline{X}_{t-1}) \cap \phi(\dots \phi(\phi(\{i\}, X_1), \dots),X_{t-2}) \cap I  \neq \emptyset$$ or  $$\phi( \phi(\{\widetilde{j}\},\widetilde{X}_t),X_{t-1}) \cap \phi(\dots\phi(\phi(\{\widetilde{i}\}, X_1), \dots),X_{t-2}) \cap I \neq \emptyset$$  and consequently $$\phi(\phi( \{\widetilde{j}\}, \widetilde{X}_t), \widetilde{X}_{t-1}) \neq \emptyset$$ for some $\widetilde{X}_{t-1} \in \{X_{t-1}, \overline{X}_{t-1}\}$.

By iterating this process we get $$\phi(\phi(\dots (\phi(\{\widetilde{j}\} , \widetilde{X}_t),\dots),\widetilde{X}_{t-m+1}), \widetilde{X}_{t-m}) \cap$$ $$\phi(\phi(\dots (\phi(\{\widetilde{i}\}, X_1), \dots),X_{t-m-2}), X_{t-m-1}) \cap I \neq \emptyset$$ for $0 \leq m \leq t-2.$ In particular, we have for the case $m=t-2$ that $$\phi(\phi(\cdots (\phi(\{\widetilde{j}\} , \widetilde{X}_t), \dots),\widetilde{X}_3), \widetilde{X}_2) \cap \phi(\{\widetilde{i}\}, X_1) \cap I \neq \emptyset.$$ Since either $\widetilde{i} = i$ or $\widetilde{i} = \overline{i}$, if we write $J := \phi(\phi(\dots (\phi(\{\widetilde{j}\} , \widetilde{X}_t), \dots),\widetilde{X}_3), \widetilde{X}_2)$, the previous equation allows us to assert that either $\phi(\{i\},X_1)\cap J \cap I \neq \emptyset$ or  $\phi(\{\overline{i}\},X_1) \cap J \cap I \neq \emptyset$  with $i \in I$. Hence  Lemma \ref{1} applies to get $$i \in \phi(\phi(\dots(\phi(\{
\widetilde{j}\},\widetilde{X}_t), \dots), \widetilde{X}_2), \widetilde{X}_1)$$ for some $\widetilde{X}_1 \in \{X_1, \overline{X}_1\}$ and conclude $\sim$ is symmetric.

Finally, let us verify the transitive character of $\sim$. Suppose $i \sim j$ and $j \sim k$, and write $\{X_1,\dots,X_t\}$  for a connection from $i$ to $j$ and $\{Y_1,\dots, Y_s\}$ for a connection from $j$ to $k$. If $i=j$ so $\{Y_1,\dots, Y_s\}$ is a connection from $i$ to $k$. If $k=j$ thus $\{X_1,\dots,X_t\}$ is a connection from $i$ to $k$. Finally, if $t
\geq 1$ and $s \geq 1$, taking into account Equation \eqref{bar} we easily have that $\{X_1,\dots,X_t,Y_1,\dots,Y_s\}$ is a connection from $i$ to $k$.
We have shown that the connection relation is an equivalence relation.
\end{proof}

\noindent By the above proposition we can consider the quotient set $$I / \sim = \{[i] : i \in I\},$$ becoming $[i]$ the set of elements in $I$ which are connected to $i$.

\begin{definition}\rm
A {\it color  gLt-subalgebra} of $\frak L$ is a $\mathbb G$-graded subspace $\frak S$ of $\frak L$ verifying 
$\langle \frak  S, \dots, \frak S \rangle \subset \frak S$. A $\mathbb G$-graded subspace $\mathcal{I} \subset\frak  L$ is a {\it color gLt-ideal} of $\frak L$ if $\langle \mathcal{I}, \frak L, \dots, \frak L \rangle_{\sigma} \subset \mathcal{I},$ for any $\sigma \in \mathbb S_n$.
\end{definition}

Our next goal in this section is to associate an $n$-ary ideal ${\frak J}_{[i]}$ of $\frak L$ to any $[i] \in I/\sim$. Fix $i \in I$, we start by defining the sets
\begin{eqnarray*}
\begin{split}
\mathbb V_{[i]} &:= \Bigl(\sum\limits_{i_1,\dots,i_n \in [i]}{\mathbb F} \langle e_{i_1},\dots,e_{i_n} \rangle \Bigr) \cap \mathbb V \subset \mathbb V,\\
\mathbb W_{[i]} &:= \bigoplus_{j \in [i]}{\mathbb F} e_j \subset \mathbb W
\end{split}
\end{eqnarray*}
Finally, we denote by ${\frak J}_{[i]}$ the direct sum of the two subspaces above, that is, $${\frak J}_{[i]}:= \mathbb V_{[i]} \oplus \mathbb W_{[i]}.$$

\begin{definition}\rm
Let $\frak L = \mathbb V \oplus\mathbb  W$ be a graded $n$-ary algebra admitting a quasi-multiplicative basis ${\mathfrak B} = \{e_i\}_{i \in I}$ of $ \mathbb W \neq 0$. It is said that a $n$-ary graded subalgebra $\frak S$ of $\frak L$ has a quasi-multiplicative basis {\it inherited} by the one of $\frak L$ if $\frak S = \mathbb V_{\frak S} \oplus \mathbb W_{\frak S}$ with $\mathbb V_{\frak  S}$ a graded linear subspace of $\mathbb V$, and $0 \neq \mathbb  W_{\frak S}$ a graded linear subspace of $\mathbb W$ admitting ${\mathfrak B}' \subset {\mathfrak B}$ as a basis.
\end{definition}

\begin{proposition}\label{teo1}
For any $i \in I$, the linear subspace ${\frak J}_{[i]}$ is a color gLt-ideal of $\frak L$ admitting a quasi-multiplicative basis inherited by the one of $\frak L$.
\end{proposition}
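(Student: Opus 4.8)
The goal is to show two things about $\mathfrak J_{[i]} = \mathbb V_{[i]} \oplus \mathbb W_{[i]}$: first, that it is a color gLt-ideal, i.e. $\langle \mathfrak J_{[i]}, \mathfrak L, \dots, \mathfrak L\rangle_\sigma \subset \mathfrak J_{[i]}$ for every $\sigma \in \mathbb S_n$; second, that it admits a quasi-multiplicative basis inherited from that of $\mathfrak L$. The second part is almost immediate once the first is established: $\mathbb W_{[i]} = \bigoplus_{j\in[i]} \mathbb F e_j$ is spanned by a subset $\mathfrak B' = \{e_j\}_{j\in[i]}$ of $\mathfrak B$, it is nonzero since $i\in[i]$, and it is graded since each $e_j$ is homogeneous; $\mathbb V_{[i]}$ is a graded subspace of $\mathbb V$ by construction (it is a sum of $\mathbb F\langle e_{i_1},\dots,e_{i_n}\rangle$ with all $i_k\in[i]$, intersected with $\mathbb V$, and each such product is homogeneous). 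So the real content is the ideal property.

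I would prove the ideal property by a case analysis on how many of the $n$ slots in $\langle \cdot,\dots,\cdot\rangle_\sigma$ are occupied by basis elements $e_j$ with $j\in[i]$ versus by $\mathbb V$ versus by arbitrary elements of $\mathfrak L$. Since $\mathfrak L = \mathbb V\oplus\mathbb W$ and $\mathbb W$ has basis $\mathfrak B$, by multilinearity it suffices to evaluate $\langle z_1,\dots,z_n\rangle$ where the first argument (or, after applying $\sigma$, some fixed argument) lies in $\mathfrak J_{[i]}$ — so is either some $e_{i_0}$ with $i_0\in[i]$ or an element of $\mathbb V_{[i]}$ — and each remaining $z_k$ is either a basis element $e_{i_k}$ ($i_k\in I$ arbitrary) or lies in $\mathbb V$. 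Now apply the quasi-multiplicative basis axioms (Definition \ref{2}): the product $\langle z_1,\dots,z_n\rangle$ lands in $\mathbb F e_r$ for some $r\in I$, or in $\mathbb V$, or is zero. In the language of the operations $a_\sigma$, the index $r$ (or the symbol $v$) is exactly $a_\sigma(j_1,\dots,j_n)$ where $j_0\in[i]$ and the other $j_k\in\mathfrak I$. The key point: if $j_0\in[i]$ and $r\in a_\sigma(j_1,\dots,j_n)$ with $r\neq v$, then $r\in\mu(j_0,\dots)$ and hence, by the definition of connection (taking a single $X_1$), $r$ is connected to $j_0$, so $r\in[i]$ — thus the product lands in $\mathbb F e_r\subset \mathbb W_{[i]}$. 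If instead the product lands in $\mathbb V$ (the $v$ case), one must argue it lands in $\mathbb V_{[i]}$; here is where one uses that the slots can be replaced by basis elements indexed in $[i]$ — but an arbitrary slot is indexed by an arbitrary element of $I$, not of $[i]$.

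**The main obstacle.** The delicate case is when the product $\langle e_{i_0}, z_2,\dots,z_n\rangle$ with $i_0\in[i]$ lands in $\mathbb V$ rather than in some $\mathbb F e_r$: I must show it lies in $\mathbb V_{[i]}$, i.e. inside $\bigl(\sum_{k_1,\dots,k_n\in[i]}\mathbb F\langle e_{k_1},\dots,e_{k_n}\rangle\bigr)\cap\mathbb V$. When the other arguments $z_k$ are basis elements with arbitrary indices $i_k\in I$, this is not obvious — the definition of $\mathbb V_{[i]}$ only collects products of basis elements all of whose indices lie in $[i]$. The resolution should be: run the connection machinery the other way. If $a_\sigma(i_0,i_2,\dots,i_n) = \{v\}$ with $i_0\in[i]$, then by Lemma \ref{lema1} (with $j$ there equal to $v$) we get $i_0 \in b_\sigma(v,\bar i_2,\dots,\bar i_n)$, i.e. $i_0\in\mu(\bar v, i_2,\dots,i_n)$ — no, more usefully: $v \in \mu(i_0,\bar i_2,\dots,\bar i_n)$, so the symbol $v$ is "reached from" $i_0$; this forces each $i_k$ to be connected to $i_0$ — because $i_0\in a_\sigma(i_0,i_2,\dots,i_n)$... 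This is exactly the kind of argument the paper's $\phi$-machinery and Lemmas \ref{lema1}–\ref{1} are built to support, so I expect the authors to chain these lemmas to conclude $i_k\in[i]$ for all $k$ (or to absorb the $\mathbb V$-contribution appropriately), whence $\langle e_{i_0},z_2,\dots,z_n\rangle\in\mathbb V_{[i]}$ by definition. The analogous, slightly easier sub-case has the distinguished argument in $\mathbb V_{[i]}$ instead of being a basis element; there one expands $\mathbb V_{[i]}$ via its defining sum of $\langle e_{k_1},\dots,e_{k_n}\rangle$, uses the gLt-identity \eqref{gLt_condition} to push the outer product inside, and reduces to the basis-element case already handled. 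Once every case deposits the output in $\mathbb W_{[i]}$, in $\mathbb V_{[i]}$, or in $\{0\}$, the ideal property follows, and the inherited-basis claim is immediate as noted above.
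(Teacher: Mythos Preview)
Your plan matches the paper's proof in structure: multilinear case-split on the slots (an $e_j$ with $j\in[i]$, a general $e_{i_k}$, or an element of $\mathbb V$), use single-step connections to show that any output in $\mathbb F e_r$ satisfies $r\in[i]$, and for the $\mathbb V_{[i]}$-slot cases invoke the gLt-identity \eqref{gLt_condition} to reduce to the basis cases. The inherited-basis claim is indeed immediate.

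The one place where your sketch does not quite land is the sub-case $0\ne\langle e_{i_0},e_{i_2},\dots,e_{i_n}\rangle_\sigma\in\mathbb V$ with $i_0\in[i]$: you correctly state that the goal is $i_k\in[i]$ for every $k$, but your formula manipulations miss. Lemma~\ref{lema1} requires both indices in $I$, so it does not apply with $j=v$; and neither $i_0\in\mu(\bar v,i_2,\dots,i_n)$ nor $v\in\mu(i_0,\bar i_2,\dots,\bar i_n)$ holds in general (the first is a bar-placement slip for $\mu(v,\bar i_2,\dots,\bar i_n)$, which is true but useless since connections cannot start at $v$). The paper's actual step is this: from $a_\sigma(i_0,i_2,\dots,i_n)=\{v\}$ one gets, after reindexing, $i_k\in b_\tau(v,\bar i_0,\bar i_2,\dots,\bar i_{k-1},\bar i_{k+1},\dots,\bar i_n)$ for a suitable $\tau$, and therefore
\[
i_k\in\mu\bigl(\bar i_0,\,v,i_2,\dots,i_{k-1},i_{k+1},\dots,i_n\bigr)
\]
via the third bullet in the definition of $\mu$. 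Taking $\widetilde{i_0}=\bar i_0$ in Definition~\ref{defco} with the single block $X_1=(v,i_2,\dots,i_{k-1},i_{k+1},\dots,i_n)$ gives a connection from $i_0$ to $i_k$, so $i_k\in[i]$ and the product lies in $\mathbb V_{[i]}$ by its very definition. With this correction your argument is complete and coincides with the paper's.
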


\begin{proof}
Given $\sigma \in \mathbb S_n,$ we can write \begin{equation}\label{s1}
\langle {\frak J}_{[i]}, \frak L, \dots, \frak L \rangle_{\sigma} = \langle \mathbb V_{[i]} \oplus \mathbb W_{[i]}, \Bigl(\mathbb V \oplus (\bigoplus_{r \in I}{\mathbb F}e_r)\Bigr), \dots, \Bigl(\mathbb V \oplus (\bigoplus_{s \in I}{\mathbb F}e_s)\Bigr) \rangle_{\sigma}.
\end{equation}

In case $\langle e_j, e_{i_2},\dots,e_{i_n} \rangle_{\sigma} \neq 0$ for some $j \in [i]$ and $i_2,\dots,i_n \in I$, we have that either $0 \neq \langle e_j, e_{i_2},\dots,e_{i_n} \rangle_{\sigma} \in {\mathbb F} e_l$ with $l \in I$ or $0 \neq \langle e_j, e_{i_2},\dots,e_{i_n} \rangle_{\sigma} \in\mathbb V$. In the first case
the connection $\{i_2, \dots, i_n\}$ gives us $j \sim l$, so $l \in [i]$ and then $\langle e_j, e_{i_2},\dots,e_{i_n} \rangle_{\sigma} \in\mathbb W_{[i]}$.   In the second case, for all $2 \leq k \leq n$ we get $i_k \in b_{\tau}(v,\overline{j},\overline{i}_2,\dots,\overline{i}_{k-1},\overline{i}_{k+1},\dots,\overline{i}_n)$ for some $\tau \in\mathbb S_n$, and so $i_k \in \mu (\overline{j},v,{i}_2,\dots,{i}_{k-1},{i}_{k+1},\dots,{i}_n).$ Hence the set $\{v,{i}_2, \dots, {i}_{k-1},{i}_{k+1},\dots,{i}_n\}$ is a connection from $j$ to $i_k$ and so $i_k \in [i]$ for $2 \leq k \leq n$. Therefore $\langle e_j, e_{i_2},\dots,e_{i_n} \rangle_{\sigma} \in V_{[i]}$. Hence we get
\begin{equation}\label{s2}
\langle \mathbb W_{[i]}, (\bigoplus\limits_{r \in I}{\mathbb F} e_r), \dots, (\bigoplus\limits_{s \in I}{\mathbb F} e_s) \rangle_{\sigma} \subset {\frak J}_{[i]}.
\end{equation}

For some $j \in [i],$ if we have $0 \neq \langle e_j, \mathbb V, \dots, \mathbb V, e_{i_2},\dots,e_{i_k}\rangle_{\sigma} \subset \mathbb{F}e_l$ for certain $l \in I$ and where $k\geq 2$, then $l \in \mu(j,v,\dots,v, \dots, i_2, \dots, i_k)$. 
So $\{v,\dots,v,\dots, i_2, \dots, i_k\}$ is a connection from $j$ to $\frak L$  then $l \in [i]$. From here we have (taking also into account Equation (\ref{s2})) that:
\begin{equation}\label{s3}
\langle \mathbb W_{[i]},\frak L,\dots,\frak L\rangle_{\sigma} \subset \mathbb W_{[i]}.
\end{equation}

Suppose there exist $i_1,\dots,i_n \in [i]$ with $0 \neq \langle e_{i_1},\dots, e_{i_n} \rangle_{\sigma} \in \mathbb V$, that is, $v \in \mu(i_1,\dots,i_n)$, in such a way that $0 \neq \langle \langle e_{i_1},\dots, e_{i_n} \rangle_{\sigma}, e_{j_1},\dots,e_{j_{n-1}} \rangle_{\tau} \in {\mathbb F} e_m$ for some $j_1,\dots,j_{n-1} \in I$. By Equation \eqref{gLt_condition} we get that
$$0 \neq \langle e_{i_{\sigma_1(1)}}, \dots, e_{i_{\sigma_1(k-1)}}, \langle e_{j_{\sigma_2(1)}}, \dots, \underbrace{e_{i_{\sigma_1(k)}}}_{\mbox{pos } h}, \dots, e_{j_{\sigma_2(n-1)}} \rangle_{\tau_1}, e_{i_{\sigma_1(k+1)}}, \dots, e_{i_{\sigma_1(n)}} \rangle_{\tau_2} \in {\mathbb F} e_m$$ for certain $1 \leq h,k \leq n$, $\sigma_1 \in \mathbb S_n, \sigma_2 \in \mathbb S_{n-1}$. The connection $\{i_{\sigma_1(2)}, \dots,i_{\sigma_1(k-1)}, i',i_{\sigma_1(k+1)}, \dots i_{\sigma_1(n)}\}$ where either $i' \in I$ if $0 \neq \langle e_{j_{\sigma_2(1)}}, \dots, e_{i_{\sigma_1(k)}}, \dots, e_{j_{\sigma_2(n-1)}} \rangle_{\tau_1} \in {\mathbb F} e_{i'}$ or $i'= v$ when $0 \neq \langle e_{j_{\sigma_2(1)}}, \dots, e_{i_{\sigma_1(k)}}, \dots, e_{j_{\sigma_2(n-1)}} \rangle_{\tau_2} \in\mathbb V,$ gives us $i_{\sigma_1(1)}$ is connected to $m$ and so $m \in [i].$ From here
\begin{equation}\label{s4}
\langle\mathbb V_{[i]}, (\bigoplus_{r \in I}{\mathbb F}e_r) ,\dots, (\bigoplus_{s \in I}{\mathbb F}e_s) \rangle_{\sigma} \subset\mathbb W_{[i]}.
\end{equation}

Now, suppose there exist $i_1,\dots,i_n \in [i]$ with $v \in \mu(i_1,\dots,i_n)$ satisfying $0 \neq \langle \langle e_{i_1},\dots,e_{i_n}\rangle_{\sigma},\mathbb V, \dots, \mathbb V \rangle_{\tau}$. We have two possibilities, in the first one $0 \neq \langle \langle e_{i_1},\dots,e_{i_n} \rangle_{\sigma}, \mathbb V, \dots, \mathbb V \rangle_{\tau} \subset {\mathbb F}e_k$ and we have as above that $k \in [i]$. In the second one 
$0 \neq \langle \langle e_{i_1}, \dots, e_{i_n} \rangle_{\sigma}, \mathbb V, \dots,\mathbb V \rangle_{\tau} \subset \mathbb V$ and we get by Equation \eqref{gLt_condition} $$0 \neq \langle e_{i_{\sigma_1(1)}}, \dots, e_{i_{\sigma_1(k-1)}}, \langle \mathbb V, \dots, e_{i_{\sigma_1(k)}}, \dots, \mathbb V \rangle_{\tau_1}, e_{i_{\sigma_1(k+1)}}, \dots, e_{i_{\sigma_1(n)}} \rangle_{\tau_2} \subset \mathbb V$$ being then $\mu(i_{\sigma_1(k)},v,\dots,v) = \{r\}$, for $r \in I$, with $\{v,\dots,v\}$ a connection from $i_{\sigma_1(k)}$ to $r$. Hence $r \in [i]$ and 
$$0 \neq \langle e_{i_{\sigma_1(1)}}, \dots, e_{i_{\sigma_1(k-1)}}, \langle\mathbb  V, \dots, e_{i_{\sigma_1(k)}}, \dots,\mathbb  V \rangle_{\tau_1}, e_{i_{\sigma_1(k+1)}}, \dots, e_{i_{\sigma_1(n)}} \rangle_{\tau_2} \subset$$ $${\mathbb F} \langle e_{i_{\sigma_1(1)}}, \dots, e_{i_{\sigma_1(k-1)}}, e_r, e_{i_{\sigma_1(k+1)}}, \dots, e_{i_{\sigma_1(n)}} \rangle_{\tau_2} \cap \mathbb V \subset\mathbb V_{[i]}.$$
We have shown
\begin{equation}\label{s5}
\langle \mathbb  V_{[i]},\mathbb  V,\dots, \mathbb V \rangle_{\tau} \subset {\frak J}_{[i]}.
\end{equation}

Finally, in case there exist $i_1,\dots,i_n \in [i]$ with $v \in \mu(i_1,\dots,i_n)$ satisfying $0 \neq \langle \langle e_{i_1},\dots,e_{i_n}\rangle_{\sigma}, \mathbb V, \dots, \mathbb V, e_{j_2},...,e_{j_k} \rangle_{\tau}$ with $k \geq 2$. We have that necessarily  $0 \neq \langle \langle e_{i_1},\dots,e_{i_n} \rangle_{\sigma}, \mathbb V, \dots, \mathbb V, e_{j_2},...,e_{j_k} \rangle_{\tau} \subset {\mathbb F}e_l$ for some $l \in I$, and we have as above that $l \in [i]$. Consequently
\begin{equation}\label{sss5}
\langle \mathbb V_{[i]}, \mathbb V,\dots,\mathbb V, \mathbb W_{[i]},...,\mathbb W_{[i]} \rangle_{\tau} \subset {\frak J}_{[i]}.
\end{equation}

From Equations \eqref{s1}-\eqref{sss5} we conclude ${\frak J}_{[i]}$ is a color gLt-ideal of $\frak L$.

Finally, observe that the decomposition ${\frak J}_{[i]} = \mathbb V_{[i]} \oplus \mathbb W_{[i]}$ together with the basis $$\{e_j : j \in [i]\}$$ of $\mathbb W_{[i]}$ allow us to assert that ${\frak J}_{[i]}$ admits a
quasi-multiplicative basis inherited by the one of $\frak L$.
\end{proof}

\begin{definition}\rm
A color gLt-algebra $\frak L$ is {\it simple} if its unique non-zero color gLt-ideals are $\{0\}$ and $\frak L$.
\end{definition}

\begin{corollary}\label{co50}
If $\frak L$ is simple, then there exists a connection between any couple of elements in the index set $I$.
\end{corollary}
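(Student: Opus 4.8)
The plan is to argue by contradiction using the ideal structure already developed. Suppose $\frak L$ is simple but there exist $i,j \in I$ with $i \not\sim j$, so that $[i] \neq I$ (since $j \notin [i]$). By Proposition \ref{teo1} the subspace ${\frak J}_{[i]} = \mathbb V_{[i]} \oplus \mathbb W_{[i]}$ is a color gLt-ideal of $\frak L$ admitting a quasi-multiplicative basis inherited by the one of $\frak L$. Since $\frak L$ is simple, ${\frak J}_{[i]}$ must be either $\{0\}$ or $\frak L$. The first option is impossible because $e_i \in \mathbb W_{[i]} \subset {\frak J}_{[i]}$ (recall $i \in [i]$ by reflexivity of $\sim$, Proposition \ref{proequiv}), so ${\frak J}_{[i]} \neq \{0\}$. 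Hence ${\frak J}_{[i]} = \frak L$.

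The core of the argument is then to derive a contradiction from ${\frak J}_{[i]} = \frak L$. Projecting onto $\mathbb W$ along the decomposition $\frak L = \mathbb V \oplus \mathbb W$, the equality ${\frak J}_{[i]} = \frak L$ forces $\mathbb W_{[i]} = \mathbb W$; but $\mathbb W_{[i]} = \bigoplus_{l \in [i]} \mathbb{F} e_l$ while $\mathbb W = \bigoplus_{l \in I} \mathbb{F} e_l$, and $\{e_l\}_{l \in I}$ is a basis of $\mathbb W$, so this would give $[i] = I$, contradicting $j \notin [i]$. Thus no such pair $i,j$ exists and $\sim$ relates any two elements of $I$.

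The one point that needs a little care — and the only place I expect any friction — is justifying that $\mathbb W_{[i]} \subsetneq \mathbb W$ genuinely prevents ${\frak J}_{[i]}$ from being all of $\frak L$; i.e., that the $\mathbb V$-component $\mathbb V_{[i]}$ cannot compensate. This is immediate from the directness of $\frak L = \mathbb V \oplus \mathbb W$: any element of ${\frak J}_{[i]} = \mathbb V_{[i]} \oplus \mathbb W_{[i]}$ has its $\mathbb W$-component lying in $\mathbb W_{[i]}$, so if ${\frak J}_{[i]} = \frak L$ then every element of $\mathbb W$ lies in $\mathbb W_{[i]}$, forcing $\mathbb W_{[i]} = \mathbb W$ and hence $[i] = I$ since the $e_l$ are linearly independent. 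With that observation the proof is complete in a few lines; no genuine obstacle arises, as all the hard work is packaged into Proposition \ref{teo1}.
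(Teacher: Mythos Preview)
Your argument is correct and matches the paper's own proof in substance: both use that ${\frak J}_{[i]}$ is a nonzero color gLt-ideal (Proposition \ref{teo1}) together with simplicity to force ${\frak J}_{[i]} = \frak L$, whence $[i]=I$. The paper compresses this into two sentences without the contradiction framing, but the idea and the key step are identical.
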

\begin{proof}
The simplicity of $\frak L$ applies to get that ${\frak J}_{[i_0]} =\frak L$ for any $i_0 \in I$. Hence $[i_0] = I$ and so any couple of elements in $I$ are connected.
\end{proof}

\begin{lemma}\label{lema1'}
If $[i] \neq [h]$ for some $i,h \in I$ then $\langle {\frak J}_{[i]}, {\frak J}_{[h]},\frak L, \dots, \frak L \rangle_{\sigma} = 0$ for any $\sigma \in\mathbb  S_n$.
\end{lemma}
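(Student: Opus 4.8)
The plan is to show that a nonzero product $\langle {\frak J}_{[i]}, {\frak J}_{[h]}, \frak L, \dots, \frak L\rangle_\sigma$ would force a connection between $i$ and $h$, contradicting $[i] \neq [h]$. Writing ${\frak J}_{[i]} = \mathbb V_{[i]} \oplus \mathbb W_{[i]}$ and ${\frak J}_{[h]} = \mathbb V_{[h]} \oplus \mathbb W_{[h]}$, and expanding $\frak L = \mathbb V \oplus (\bigoplus_{r\in I}\mathbb F e_r)$ in each of the remaining $n-2$ slots, by multilinearity it suffices to treat the basic summands where the first argument is $e_a$ with $a \in [i]$ or lies in $\mathbb V_{[i]}$, the second is $e_b$ with $b \in [h]$ or lies in $\mathbb V_{[h]}$, and the rest are either basis elements $e_{c_k}$ or $\mathbb V$.

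First I would handle the cleanest case: both distinguished arguments are basis elements, so we look at a nonzero $\langle e_a, e_b, z_3, \dots, z_n\rangle_\sigma$ with $a\in[i]$, $b\in[h]$ and each $z_k \in \{e_{c_k} : c_k\in I\}\cup\mathbb V$. Setting $j_k := c_k$ if $z_k = e_{c_k}$ and $j_k := v$ if $z_k \in \mathbb V$, nonvanishing of this product means the relevant $a_\tau$ (hence $\mu$) sends $(a,b,j_3,\dots,j_n)$ — in the order dictated by $\sigma$ — to some $\{r\}$ with $r\in I$ or to $\{v\}$. In either situation, by the definition of $\mu$ and the $b_\tau$ maps (exactly as used in the proof of Proposition \ref{teo1}), we get $b \in \mu(a, \overline{b}, \overline{j}_3, \dots, \overline{j}_n)$ after reordering, so that $\{b, j_3, \dots, j_n\}$ (with bars placed appropriately) is a connection from $a$ to $b$; hence $a \sim b$, giving $[i] = [a] = [b] = [h]$, a contradiction. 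When one or both distinguished slots lie in $\mathbb V_{[i]}$ or $\mathbb V_{[h]}$, I would first apply \eqref{gLt_condition} to rewrite the product: a factor such as $\langle e_{a_1},\dots,e_{a_n}\rangle \in \mathbb V_{[i]}$ sitting inside the outer bracket is pushed out via the gLt-identity, reducing to a product whose distinguished argument is a genuine basis element $e_{a_1}$ with $a_1 \in [i]$ (all other $e_{a_\ell}$, $\ell\geq 2$, being absorbed into the remaining slots as $e_{a_\ell}$ or, after an inner product, as an element of $\mathbb F e_{i'}$ or $\mathbb V$ — which is precisely how connections were built in the proof of Proposition \ref{teo1}). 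The same maneuver handles the $\mathbb V_{[h]}$ factor, and then the basic case above applies.

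The main obstacle is bookkeeping: a single product can have \emph{both} the $[i]$-argument and the $[h]$-argument coming from $\mathbb V$-parts, so one must apply \eqref{gLt_condition} twice and carefully track, through the permutations $\sigma_1 \in \mathbb S_n$, $\sigma_2\in\mathbb S_{n-1}$ and the color scalars $\alpha_{i,j,k}^{\sigma_1,\sigma_2}$, that at least one surviving summand is nonzero and still witnesses a chain of $\mu$-steps linking $[i]$ to $[h]$. The key point that makes this work is that $\langle{\frak J}_{[i]},{\frak J}_{[h]},\frak L,\dots,\frak L\rangle_\sigma$, if nonzero, by Proposition \ref{teo1} lies simultaneously in ${\frak J}_{[i]}$ (it is a product with ${\frak J}_{[i]}$ in the first slot and an ideal absorbs $\frak L$) and in ${\frak J}_{[h]}$ (after using the gLt-identity to move ${\frak J}_{[h]}$ into the leading position, its ideal property applies); since ${\frak J}_{[i]} \cap {\frak J}_{[h]} = \mathbb (V_{[i]}\cap\mathbb V_{[h]}) \oplus (\mathbb W_{[i]}\cap\mathbb W_{[h]})$ and $\mathbb W_{[i]}\cap\mathbb W_{[h]} = \bigoplus_{j\in[i]\cap[h]}\mathbb F e_j = 0$ because $[i]\neq[h]$ are disjoint equivalence classes, the $\mathbb W$-component of the product vanishes, so one only needs to rule out a nonzero $\mathbb V$-component, which is exactly where the double application of \eqref{gLt_condition} closes the argument.

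Thus the proof structure I would write is: (1) reduce by multilinearity to basic summands; (2) observe via Proposition \ref{teo1} that a nonzero product lies in ${\frak J}_{[i]}\cap{\frak J}_{[h]}$, killing the $\mathbb W$-part; (3) for the surviving $\mathbb V$-valued summands, invoke \eqref{gLt_condition} to relocate any $\mathbb V_{[i]}$- or $\mathbb V_{[h]}$-factor as a leading basis element, producing — exactly as in the proof of Proposition \ref{teo1} — a connection from some element of $[i]$ to some element of $[h]$; (4) conclude $[i]=[h]$, contradiction, hence the product is zero.
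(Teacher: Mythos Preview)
Your approach is essentially the paper's own: decompose $\langle {\frak J}_{[i]}, {\frak J}_{[h]}, \frak L, \dots, \frak L\rangle_\sigma$ according to ${\frak J}_{[\cdot]} = \mathbb V_{[\cdot]} \oplus \mathbb W_{[\cdot]}$, use the ideal property from Proposition~\ref{teo1} (the paper invokes the intermediate equations \eqref{s3}--\eqref{s5} from its proof) to force the $\mathbb W$-valued part into $\mathbb W_{[i]} \cap \mathbb W_{[h]} = 0$, treat the remaining $\mathbb V$-valued basis-element case by exhibiting an explicit connection, and dispose of the $\mathbb V_{[i]}, \mathbb V_{[h]}$ cases by unpacking via the gLt-identity \eqref{gLt_condition}. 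Two small cleanups: you do not need the gLt-identity to ``move ${\frak J}_{[h]}$ into the leading position'', since the color gLt-ideal definition already quantifies over all $\sigma$, so $\langle {\frak J}_{[i]}, {\frak J}_{[h]}, \frak L, \dots, \frak L\rangle_\sigma \subset {\frak J}_{[i]} \cap {\frak J}_{[h]}$ is immediate; and the displayed formula ``$b \in \mu(a, \overline{b}, \overline{j}_3, \dots, \overline{j}_n)$'' is garbled --- in the $\mathbb V$-valued subcase the correct route (as the paper writes it) is $a \in b_\tau(v, \overline{b}, \overline{j}_3, \dots, \overline{j}_n) \subset \mu(\overline{b}, v, j_3, \dots, j_n)$, so that $\{v, j_3, \dots, j_n\}$ is a connection from $b$ to $a$.
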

\begin{proof}
We have to study the product $\langle\mathbb  V_{[i]} \oplus\mathbb  W_{[i]},\mathbb  V_{[h]} \oplus\mathbb  W_{[h]}, \frak L, \dots, \frak L \rangle_{\sigma}.$
By Equations \eqref{s3} and \eqref{s4} we have the following subsets of $\mathbb W_{[i]} \cap \mathbb W_{[h]}$ satisfy
\begin{equation}\label{s10}
\langle \mathbb V_{[i]},\mathbb W_{[h]},\frak L,\dots, \frak L \rangle_{\sigma} = \langle \mathbb W_{[i]}, \mathbb V_{[h]},\frak  L, \dots, \frak L \rangle_{\sigma} = \{0\}.
\end{equation}
We also have as consequence of the previous comments to Equation \eqref{s2} and to Equation \eqref{s5} that 
$$\langle\mathbb  W_{[i]},\mathbb  W_{[h]},\frak L, \dots, \frak L \rangle_{\sigma} \cap (\bigoplus\limits_{k \in I} {\mathbb F}e_k) \subset \mathbb W_{[i]} \cap\mathbb  W_{[h]} = \{0\}$$ and 
$$\langle \mathbb V_{[i]}, \mathbb V_{[h]}, \frak L, \dots, \frak L \rangle_{\sigma} \cap (\bigoplus\limits_{k \in I} {\mathbb F}e_k) \subset \mathbb W_{[i]} \cap \mathbb W_{[h]} = \{0\}$$ respectively.  From here, it just remains to consider the products $\langle e_{i'}, e_{h'}, e_{k_3}, \dots, e_{k_n} \rangle_{\sigma} \in \mathbb V$ for $i' \in [i]$, $h' \in [h]$, $k_3,\dots,k_n \in I$ and $\langle \langle e_{i'_1},\dots, e_{i'_n} \rangle_{\sigma_1}, \langle  e_{h'_1}, e_{h'_2},\dots, e_{h'_n} \rangle_{\sigma_2},\mathbb  V,\mathbb V, \ldots ,\mathbb V \rangle_{\sigma_3}\in\mathbb  V$ for $i'_1,\dots,i'_n \in [i]$, $h'_1,\dots,h'_n \in [h]$ with $\phi(i'_1, \dots, i'_n) = \phi(h'_1, \dots, h'_n) = \{v\}$. In the first situation, if $\langle e_{i'}, e_{h'}, e_{k_3}, \dots, e_{k_n} \rangle_{\sigma} \neq 0$, then  $i' \in b_{\sigma}(v, \overline{h'}, \overline{k_3},..., \overline{k_n})$ and so $i' \in \mu  (\overline{h'},v,k_3,...,k_n)$. From here
the connection $\{v,{k}_3,\dots, {k}_n\}$ gives us $h'$ is connected to $i'$, that is $[i] = [h]$ a contradiction, so $\langle e_{i'}, e_{h'}, e_{k_3}, \dots, e_{k_n} \rangle_{\sigma} = 0$.

In the second situation we deal, by Equation \eqref{gLt_condition}, with $n$-ary products of the form  $\langle e_{h'_2}, \dots, e_{h'_{k-1}}, e_{i'_{\sigma_1(1)}}, e_{h'_{k+1}}, \dots e_{h'_n} \rangle_{\sigma} $ for certain $\sigma_1 \in \mathbb S_n$ and $2 \leq k \leq n$. In case some $\langle e_{h'_2}, \dots, e_{h'_{k-1}}, e_{i'_{\sigma_1(1)}}, e_{h'_{k+1}}, \dots e_{h'_n} \rangle_{\sigma} \neq 0 $ we would have $\langle \mathbb V_{[i]}, \mathbb W_{[h]}, \dots, \mathbb W_{[h]} \rangle_{\sigma} \neq 0$ what contradicts Equation \eqref{s10}. From here any $\langle \mathbb V_{[i]}, \mathbb W_{[h]}, \dots, \mathbb W_{[h]} \rangle_{\sigma} =0$, then $\langle \langle e_{i'_1},\dots, e_{i'_n} \rangle_{\sigma_1}, \langle  e_{h'_1}, e_{h'_2},\dots, e_{h'_n} \rangle_{\sigma_2}, \mathbb V,  \ldots ,\mathbb  V \rangle_{\sigma_3}=0$ and the proof is complete. 
\end{proof}

\begin{theorem} \label{teo2}
A color gLt-algebra $\frak L = \mathbb V \oplus \mathbb W$ admitting a quasi-multiplicative basis of $\mathbb W \neq 0$ decomposes as $$\frak L = {\mathcal U} \oplus \Bigl(\sum\limits_{[i] \in I/\sim}{\frak J}_{[i]} \Bigr),$$ where ${\mathcal U}$ is a linear complement of $\sum_{[i] \in I/\sim}\mathbb  V_{[i]}$ in $\mathbb V$ and any ${\frak J}_{[i]}$ is one of the color  gLt-ideals, admitting a quasi-multiplicative basis inherited by the one of $\frak L$, given in Proposition \ref{teo1}. Furthermore $$\langle {\frak J}_{[i]}, {\frak J}_{[h]}, \frak L, \ldots, \frak L \rangle_{\sigma} = 0$$ whenever $[i] \neq [h]$.
\end{theorem}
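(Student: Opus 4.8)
The plan is to assemble Theorem~\ref{teo2} directly from the building blocks established above: Proposition~\ref{teo1}, which says each ${\frak J}_{[i]}$ is a color gLt-ideal with an inherited quasi-multiplicative basis, and Lemma~\ref{lema1'}, which gives the orthogonality relation $\langle {\frak J}_{[i]}, {\frak J}_{[h]}, \frak L, \dots, \frak L \rangle_{\sigma} = 0$ for $[i] \neq [h]$. The only genuinely new content to verify is that $\frak L$ actually decomposes as the claimed sum, i.e.\ that $\mathbb W = \sum_{[i] \in I/\sim} \mathbb W_{[i]}$, that $\sum_{[i]} \mathbb V_{[i]} \subseteq \mathbb V$ admits ${\mathcal U}$ as complement, and that $\sum_{[i]} {\frak J}_{[i]}$ genuinely exhausts $\mathbb W$ together with part of $\mathbb V$.

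First I would handle $\mathbb W$: since $\sim$ is an equivalence relation on $I$ (Proposition~\ref{proequiv}), the classes $\{[i] : [i] \in I/\sim\}$ partition $I$, and as $\mathbb W = \bigoplus_{j \in I} \mathbb F e_j$ with $\mathbb W_{[i]} = \bigoplus_{j \in [i]} \mathbb F e_j$, we get $\mathbb W = \bigoplus_{[i] \in I/\sim} \mathbb W_{[i]}$ as a direct sum. Next, for the $\mathbb V$-part, by definition $\mathbb V_{[i]} = \bigl(\sum_{i_1,\dots,i_n \in [i]} \mathbb F \langle e_{i_1},\dots,e_{i_n}\rangle\bigr) \cap \mathbb V$ is a graded subspace of $\mathbb V$, hence so is $\sum_{[i] \in I/\sim} \mathbb V_{[i]}$, and one simply chooses ${\mathcal U}$ to be any graded linear complement of $\sum_{[i]} \mathbb V_{[i]}$ inside $\mathbb V$; such a complement exists because we are working with vector spaces. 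Then $\frak L = \mathbb V \oplus \mathbb W = {\mathcal U} \oplus \bigl(\sum_{[i]} \mathbb V_{[i]}\bigr) \oplus \bigl(\bigoplus_{[i]} \mathbb W_{[i]}\bigr) = {\mathcal U} \oplus \bigl(\sum_{[i] \in I/\sim} {\frak J}_{[i]}\bigr)$, where I absorb $\mathbb V_{[i]} \oplus \mathbb W_{[i]} = {\frak J}_{[i]}$ and note the sum over classes need not be direct on the $\mathbb V$-side (the statement only claims a sum there, consistent with the abstract). The last displayed orthogonality relation is then just a verbatim restatement of Lemma~\ref{lema1'}, and the ideal/basis-inheritance properties of each ${\frak J}_{[i]}$ are quoted from Proposition~\ref{teo1}.

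The main (and essentially only) subtlety to be careful about is bookkeeping: one should check that the sum ${\mathcal U} + \sum_{[i]} \mathbb V_{[i]} + \sum_{[i]} \mathbb W_{[i]}$ really recovers all of $\frak L$ and not merely a subalgebra — but this is immediate since ${\mathcal U} \oplus (\sum_{[i]} \mathbb V_{[i]}) = \mathbb V$ by choice of ${\mathcal U}$, and $\sum_{[i]} \mathbb W_{[i]} = \mathbb W$ by the partition of $I$. No obstacle arises because all the hard work — the symmetry and transitivity of $\sim$, the ideal property of ${\frak J}_{[i]}$, and the orthogonality — has already been done in Propositions~\ref{proequiv} and~\ref{teo1} and Lemma~\ref{lema1'}; Theorem~\ref{teo2} is a packaging of these results. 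I would therefore write the proof as a short paragraph: invoke Proposition~\ref{proequiv} for the partition, Proposition~\ref{teo1} for the ideals and inherited bases, pick ${\mathcal U}$ as a complement, assemble the direct sum, and cite Lemma~\ref{lema1'} for the final equation.
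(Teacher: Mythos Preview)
Your proposal is correct and follows essentially the same approach as the paper's own proof: decompose $\mathbb W = \bigoplus_{[i]} \mathbb W_{[i]}$ via the partition of $I$ into $\sim$-classes, choose $\mathcal U$ as a linear complement of $\sum_{[i]} \mathbb V_{[i]}$ in $\mathbb V$, assemble, and then cite Proposition~\ref{teo1} and Lemma~\ref{lema1'} for the ideal and orthogonality assertions. If anything, you spell out more of the bookkeeping than the paper does; the paper's proof is a single short paragraph doing exactly what you outline.
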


\begin{proof}
Since we can write $$\frak L = \mathbb V \oplus \Bigl(\bigoplus\limits_{i \in I}{\mathbb F}e_i \Bigr)$$ and 
$$\mathbb V = {\mathcal U} \oplus \Bigl(\sum\limits_{[i] \in I/\sim} \mathbb V_{[i]} \Bigr), \hspace{0.3cm} \bigoplus\limits_{i \in I}{\mathbb F}e_i = \bigoplus\limits_{[i] \in I/\sim} W_{[i]}$$ we clearly have 
$$\frak L = {\mathcal U} \oplus \Bigl(\sum\limits_{[i] \in I/\sim} {\frak J}_{[i]} \Bigr)$$ being each 
${\frak J}_{[i]}$ a color gLt-ideal of $\frak L$, admitting a quasi-multiplicative basis inherited by the one of $\frak L$, satisfying $\langle {\frak J}_{[i]}, {\frak J}_{[h]}, \frak L, \dots, \frak L \rangle = 0$ when $[i] \neq [h]$ by Proposition \ref{teo1} and Lemma \ref{lema1'}.
\end{proof}

In case $\frak L$ admits a multiplicative basis (see Definition \ref{11}) we have as an immediate consequence of Theorem \ref{teo2} the next result.

\begin{corollary}\label{co20}
If $\frak L$ admits a multiplicative basis, then $$\frak L = \bigoplus\limits_{[i] \in I/\sim} {\frak J}_{[i]},$$ where any ${\frak J}_{[i]}$ is one of the color  gLt-ideals given in Proposition \ref{teo1}, admitting each one a multiplicative basis inherited by the one of $\frak L$.
\end{corollary}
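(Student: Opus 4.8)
The plan is to obtain this statement as the special case $\mathbb V = \{0\}$ of Theorem \ref{teo2}. First I would recall the remark following Definition \ref{2}: when the subspace $\mathbb V$ is trivial, a quasi-multiplicative basis is exactly a multiplicative basis in the sense of Definition \ref{11}. Thus, writing $\frak L = \{0\} \oplus \mathbb W$ with $\mathbb W = \frak L$ and taking $\mathfrak{B} = \{e_i\}_{i\in I}$ to be the given multiplicative basis, we are in the hypotheses of Theorem \ref{teo2}, which then applies verbatim.

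Next I would trace how the objects appearing in that theorem degenerate. Since $\mathbb V_{[i]} = \bigl(\sum_{i_1,\dots,i_n\in[i]}{\mathbb F}\langle e_{i_1},\dots,e_{i_n}\rangle\bigr)\cap\mathbb V$ and $\mathbb V = \{0\}$, we get $\mathbb V_{[i]} = \{0\}$ for every class $[i]\in I/\sim$, whence ${\frak J}_{[i]} = \mathbb V_{[i]}\oplus\mathbb W_{[i]} = \mathbb W_{[i]} = \bigoplus_{j\in[i]}{\mathbb F}e_j$. Likewise the complement ${\mathcal U}$ of $\sum_{[i]\in I/\sim}\mathbb V_{[i]} = \{0\}$ in $\mathbb V = \{0\}$ is forced to be $\{0\}$. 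Since the classes $[i]$ partition $I$, the identity $\bigoplus_{i\in I}{\mathbb F}e_i = \bigoplus_{[i]\in I/\sim}\mathbb W_{[i]}$ already recorded in the proof of Theorem \ref{teo2} exhibits the relevant sum as direct, so the conclusion of that theorem reads $\frak L = \bigoplus_{[i]\in I/\sim}{\frak J}_{[i]}$.

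Finally, for the assertion about inherited bases I would invoke Proposition \ref{teo1}: each ${\frak J}_{[i]}$ is a color gLt-ideal admitting a quasi-multiplicative basis inherited by the one of $\frak L$, arising from the splitting ${\frak J}_{[i]} = \mathbb V_{[i]}\oplus\mathbb W_{[i]}$ together with the basis $\{e_j : j\in[i]\}$ of $\mathbb W_{[i]}$. As $\mathbb V_{[i]} = \{0\}$, closure of $\mathbb W_{[i]}$ under the $n$-ary product (a special case of Equation \eqref{s3}, and in any case immediate from $\frak L$ having a multiplicative basis) shows that $\{e_j : j\in[i]\}$ is itself a multiplicative basis of ${\frak J}_{[i]}$ in the sense of Definition \ref{11}, inherited by the one of $\frak L$. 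I do not foresee a genuine obstacle: once $\mathbb V$ is set to zero, every delicate term (the complement ${\mathcal U}$, the spaces $\mathbb V_{[i]}$, the possible non-directness of the sum) collapses, so the corollary is a direct reading-off of Theorem \ref{teo2} and Proposition \ref{teo1}.
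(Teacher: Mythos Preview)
Your proposal is correct and follows the same route as the paper, which simply records the corollary as an immediate consequence of Theorem \ref{teo2} without further argument. You have merely spelled out in detail why setting $\mathbb V=\{0\}$ forces $\mathcal U=\{0\}$, $\mathbb V_{[i]}=\{0\}$, and makes the sum direct with each inherited basis multiplicative --- exactly the trivial specialization the paper has in mind.
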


\begin{definition}\rm
Let $\frak L = \mathbb V \oplus \mathbb W$ be a color gLt-algebra admitting a quasi-multiplicative basis. We call the {\it center} of $\frak L$ to the set $${\mathcal Z}(\frak L) := \{x \in\frak  L : \langle x, \frak L, \dots, \frak L \rangle_{\sigma} = 0 \hspace{0.2cm} \mbox{for any} \hspace{0.2cm} \sigma \in \mathbb S_n\}.$$ We also say that $\mathbb V$ is {\it tight} whence $\mathbb V = \{0\}$ or $\mathbb V = \sum\limits_{{\tiny
\begin{array}{c}
  i_1,\dots,i_n \in I \\
  \mu(i_1,\dots,i_n) = \{{v}\} \\
\end{array}}} {\mathbb F} \langle e_{i_1},\dots, e_{i_n} \rangle.$
\end{definition}

\begin{corollary}\label{co1}
Suppose $\frak L$ is centerless and $\mathbb V$ is tight, then $\frak L$ decomposes as the direct sum of the color gLt-ideals given in Proposition \ref{teo1}, $$\frak L = \bigoplus_{[i] \in I/\sim} {\frak J}_{[i]}.$$
\end{corollary}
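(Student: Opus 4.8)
The plan is to start from the decomposition $\frak L={\mathcal U}\oplus\bigl(\sum_{[i]\in I/\sim}{\frak J}_{[i]}\bigr)$ given by Theorem~\ref{teo2}, so that the assertion $\frak L=\bigoplus_{[i]\in I/\sim}{\frak J}_{[i]}$ reduces to showing that ${\mathcal U}=0$ and that the sum $\sum_{[i]}{\frak J}_{[i]}$ is direct. For the first part, since ${\mathcal U}$ is a linear complement of $\sum_{[i]}\mathbb V_{[i]}$ in $\mathbb V$, it is enough to prove $\mathbb V=\sum_{[i]}\mathbb V_{[i]}$. If $\mathbb V=\{0\}$ this is clear; otherwise tightness gives $\mathbb V=\sum_{\mu(i_1,\dots,i_n)=\{v\}}\mathbb F\langle e_{i_1},\dots,e_{i_n}\rangle$, and for each nonzero generator $\langle e_{i_1},\dots,e_{i_n}\rangle\in\mathbb V$ the connection argument already carried out inside the proof of Proposition~\ref{teo1} (the case $0\neq\langle e_j,e_{i_2},\dots,e_{i_n}\rangle_\sigma\in\mathbb V$) shows $i_2,\dots,i_n\in[i_1]$, whence that generator lies in $\mathbb V_{[i_1]}$. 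Thus $\mathbb V\subseteq\sum_{[i]}\mathbb V_{[i]}$, so ${\mathcal U}=0$ and $\frak L=\sum_{[i]}{\frak J}_{[i]}$.

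For directness, fix a class $[i_0]\in I/\sim$ and set ${\mathcal K}:={\frak J}_{[i_0]}\cap\bigl(\sum_{[h]\neq[i_0]}{\frak J}_{[h]}\bigr)$; it suffices to prove ${\mathcal K}=\{0\}$ for every $[i_0]$. A short bookkeeping argument, using that $\bigoplus_{[i]}\mathbb W_{[i]}$ is a direct sum (the classes partition $I$) and that $\mathbb V\cap\mathbb W=\{0\}$, shows ${\mathcal K}=\mathbb V_{[i_0]}\cap\sum_{[h]\neq[i_0]}\mathbb V_{[h]}$; in particular ${\mathcal K}\subseteq\mathbb V$. Since ${\frak J}_{[i_0]}$ and $\sum_{[h]\neq[i_0]}{\frak J}_{[h]}$ are color gLt-ideals of $\frak L$ (Proposition~\ref{teo1} together with the fact that a sum of ideals is an ideal), their intersection ${\mathcal K}$ is a color gLt-ideal, i.e. $\langle{\mathcal K},\frak L,\dots,\frak L\rangle_\sigma\subseteq{\mathcal K}$ for every $\sigma\in\mathbb S_n$.

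The crucial step is to upgrade this to $\langle{\mathcal K},\frak L,\dots,\frak L\rangle_\sigma=\{0\}$, since then ${\mathcal K}\subseteq{\mathcal Z}(\frak L)=\{0\}$ by centerlessness and the proof is complete. Fix $k\in{\mathcal K}$ and $y_2,\dots,y_n\in\frak L$; by multilinearity each $y_j$ may be taken to be a basis element $e_{c_j}$ or an element of $\mathbb V$. If some $y_j=e_{c_j}$ with $c_j\notin[i_0]$, then $k\in{\frak J}_{[i_0]}$ and $e_{c_j}\in{\frak J}_{[c_j]}$ with $[c_j]\neq[i_0]$, so Lemma~\ref{lema1'} (which holds for every $\sigma$, hence for every arrangement of the arguments) annihilates the product. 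If some $y_j$ is a basis element with index in $[i_0]$ while no $y_j$ lies outside $\mathbb V\cup\mathbb W_{[i_0]}$, then the product has at least one basis entry and at least one entry in $\mathbb V$ (namely $k$), so item~2 of Definition~\ref{2} places it in $\mathbb W$; as it also lies in ${\mathcal K}\subseteq\mathbb V$ it is $0$. The remaining case is $y_2,\dots,y_n\in\mathbb V$: here one expands $\mathbb V=\sum_{[h]}\mathbb V_{[h]}$ in every slot, so a term $\langle k,y_2^{[h_2]},\dots,y_n^{[h_n]}\rangle_\sigma$ with some $[h_j]\neq[i_0]$ vanishes by Lemma~\ref{lema1'} (as $k\in{\frak J}_{[i_0]}$ and $y_j^{[h_j]}\in{\frak J}_{[h_j]}$), and in the surviving term $\langle k,\mathbb V_{[i_0]},\dots,\mathbb V_{[i_0]}\rangle_\sigma$ one expands $k\in{\mathcal K}\subseteq\sum_{[h]\neq[i_0]}\mathbb V_{[h]}$, so that again each resulting product pairs some ${\frak J}_{[h]}$ ($[h]\neq[i_0]$) with ${\frak J}_{[i_0]}$ and vanishes by Lemma~\ref{lema1'}. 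Hence $\langle{\mathcal K},\frak L,\dots,\frak L\rangle_\sigma=\{0\}$, so ${\mathcal K}=\{0\}$; therefore $\sum_{[i]}{\frak J}_{[i]}$ is direct and, together with ${\mathcal U}=0$, we conclude $\frak L=\bigoplus_{[i]\in I/\sim}{\frak J}_{[i]}$.

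I expect the last case of the case analysis (all $y_j\in\mathbb V$) to be the only genuine obstacle: the bounds coming directly from Definition~\ref{2} merely put $\langle{\mathcal K},\mathbb V,\dots,\mathbb V\rangle$ back inside ${\mathcal K}$, so one has to feed in real structure, and the device that makes the product collapse is precisely to use tightness to write each $\mathbb V$ as $\sum_{[h]}\mathbb V_{[h]}$ and to exploit that ${\mathcal K}\subseteq\mathbb V_{[i_0]}$ is \emph{simultaneously} a sum of pieces coming from the other ideals, turning every surviving product into a cross product of two distinct ${\frak J}$'s killed by Lemma~\ref{lema1'}.
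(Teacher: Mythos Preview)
Your argument is correct: the computation of ${\mathcal U}=0$ from tightness is fine, the bookkeeping giving ${\mathcal K}\subset\mathbb V$ is right, and each of your three cases in the proof of $\langle{\mathcal K},\frak L,\dots,\frak L\rangle_\sigma=0$ goes through (in the second case you are implicitly using that ${\mathcal K}$ is an ideal to place the product back in ${\mathcal K}\subset\mathbb V$, and in the third case you are using ${\mathcal U}=0$ to write $\mathbb V=\sum_{[h]}\mathbb V_{[h]}$; both are legitimate).

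However, the paper reaches the same conclusion by a much shorter route that avoids the whole case analysis and the preliminary reduction ${\mathcal K}\subset\mathbb V$. Given $x\in{\frak J}_{[i_0]}\cap\sum_{[h]\neq[i_0]}{\frak J}_{[h]}$, the paper decomposes only the \emph{second} slot of the product via $\frak L=\sum_{[i]}{\frak J}_{[i]}$ (which holds because ${\mathcal U}=0$). For the summand ${\frak J}_{[i_0]}$ in that slot one uses the membership $x\in\sum_{[h]\neq[i_0]}{\frak J}_{[h]}$ together with Lemma~\ref{lema1'} to get $\langle x,{\frak J}_{[i_0]},\frak L,\dots,\frak L\rangle_\sigma=0$; for each summand ${\frak J}_{[h]}$ with $[h]\neq[i_0]$ one uses $x\in{\frak J}_{[i_0]}$ and Lemma~\ref{lema1'} to get $\langle x,{\frak J}_{[h]},\frak L,\dots,\frak L\rangle_\sigma=0$. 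Adding up yields $\langle x,\frak L,\dots,\frak L\rangle_\sigma=0$ directly. This ``dual membership'' trick is the whole point: it lets Lemma~\ref{lema1'} handle every piece uniformly, with no need to look at basis elements, at item~2 of Definition~\ref{2}, or at a second expansion of $\mathbb V$. Your third case is essentially rediscovering this idea, but only after restricting to $\mathbb V$ and after two unnecessary cases; the paper applies it from the start.
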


\begin{proof}
By Theorem \ref{teo2}, since ${\mathcal U}=0$, we just have to show the direct character of the sum. Given $$x \in {\frak J}_{[i]}
\cap \sum\limits_{\tiny{\begin{array}{c}
[j] \in I/\sim \\
j \nsim i\\
\end{array}}} {\frak J}_{[j]},$$ by using the fact $\langle {\frak J}_{[i]}, {\frak J}_{[h]}, \frak L, \dots, \frak L \rangle_{\sigma} = 0$ for $[i] \neq [h]$ and any $\sigma \in \mathbb S_n$ we obtain $$\langle x, {\frak J}_{[i]}, \frak L, \dots, \frak L \rangle_{\sigma} = \langle x, \sum_{\tiny{\begin{array}{c}
[j] \in I / \sim  \\
j \nsim i\\
\end{array}}} {\frak J}_{[h]},\frak  L,\dots, \frak L \rangle_{\sigma} = 0.$$
It implies $\langle x, \frak L, \dots,\frak L \rangle_{\sigma} = 0,$ so $x \in {\mathcal Z}(\frak L) = 0$, as desired.
\end{proof}

%%%%%%%%%%%%%%%%%%%%%%%%%%%%%%%%%%%%%%%%%%%%%%%%%%%%%%%%%%%%%
%%%%%%%%%%%%%%%%%%%%%%%%%%%%%%%%%%%%%%%%%%%%%%%%%%%%%%%%%%%%%
\section{The minimal components}\label{minimal}
%%%%%%%%%%%%%%%%%%%%%%%%%%%%%%%%%%%%%%%%%%%%%%%%%%%%%%%%%%%%%
%%%%%%%%%%%%%%%%%%%%%%%%%%%%%%%%%%%%%%%%%%%%%%%%%%%%%%%%%%%%%

In this section we study the minimality of the components in the decompositions of color gLt-algebras given in Theorem \ref{teo2}, Corollary \ref{co20} and Corollary \ref{co1}. So we introduce the next concept.

\begin{definition}\rm
Let $\frak L = \mathbb V \oplus \mathbb W$ be a color gLt-algebra admitting a quasi-multiplicative basis ${\mathfrak B}= \{e_i\}_{i \in I}$ of $W \neq 0$. It is said that $\frak L$ is {\it minimal} if its only non-zero color gLt-ideal admitting a basis inherited by the one of $\frak L$ is itself.
\end{definition}

\noindent Let us introduce the notion of $\mu$-multiplicativity in the framework of color gLt-algebras with quasi-multiplicative bases in a similar way to the ones of closed-multiplicativity for associative quasi-multiplicative algebras, graded associative algebras, graded Lie algebras, split Leibniz algebras or split Lie triple systems (see \cite{cuasi, Yo1, Yo5, Yo4, Yo3} for these notions and examples). From now on, for any $\overline{i} \in \overline{I}$ we denote $e_{\overline{i}} = 0.$

\begin{definition}\label{defmulti}\rm
Let $\frak L = \mathbb V \oplus \mathbb W$ be a color gLt-algebra admitting a quasi-multiplicative basis ${\mathfrak B}= \{e_i\}_{i \in I}$ of $\mathbb W \neq 0$. We say that $\frak L$ admits a {\it $\mu$-quasi-multiplicative basis} if
given $i \in I$ and $k_1,\dots,k_{n} \in ({\frak I} \times \cdots \times {\frak I} ) \dot{\cup} (\overline{{\frak I}}\times \cdots \times \overline{{\frak I}})$ such that  $$i \in \mu(k_1,\dots,k_{n}) \hspace{0.2cm} {\rm then} \hspace{0.2cm} e_i \in {\mathbb F} \langle  u_{k_1}, \dots, u_{k_{n}} \rangle_{\sigma}$$ \ for some $\sigma \in S_n,$ where $u_{k_r} = e_{k_r} + e_{\overline{k}_r}$ if $k_r \notin \{v,\overline{v}\}$ or $u_{k_r} =\mathbb V$ if $k_r \in \{v, \overline{v}\},$ for $1 \leq r \leq n$.
\end{definition}

\begin{theorem}\label{last}
Suppose $\frak L$ admits a $\mu$-quasi-multiplicative basis and $\mathbb V$ is tight. It holds that $\frak L$ is minimal if and only if $I$ has all of its elements connected.
\end{theorem}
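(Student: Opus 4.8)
The plan is to establish the two implications separately. The implication from minimality to connectedness is immediate from the decomposition already obtained and uses neither the $\mu$-quasi-multiplicativity nor the tightness of $\mathbb V$; the content lies in the converse.

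\emph{Minimality $\Rightarrow$ connectedness.} I would argue by contraposition. If $I$ is not a single connection class, pick $i,h\in I$ with $[i]\neq[h]$. By Proposition \ref{teo1} the subspace $\frak J_{[i]}=\mathbb V_{[i]}\oplus\mathbb W_{[i]}$ is a color gLt-ideal of $\frak L$ admitting a basis inherited by the one of $\frak L$. It is non-zero, since $e_i\in\mathbb W_{[i]}$, and it is proper, since $e_h\notin\frak J_{[i]}$ (indeed $\mathbb W_{[i]}=\bigoplus_{j\in[i]}\mathbb F e_j$ with $h\notin[i]$, while $\mathbb V_{[i]}\subset\mathbb V$ and $e_h\in\mathbb W$). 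Hence $\frak L$ is not minimal.

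\emph{Connectedness $\Rightarrow$ minimality.} Assume every pair of elements of $I$ is connected and let $\mathcal I=\mathbb V_{\mathcal I}\oplus\mathbb W_{\mathcal I}$ be a non-zero color gLt-ideal admitting a basis inherited by the one of $\frak L$, say $\mathbb W_{\mathcal I}=\bigoplus_{i\in I_{\mathcal I}}\mathbb F e_i$ with $\emptyset\neq I_{\mathcal I}\subset I$. Fix $i_0\in I_{\mathcal I}$, so $e_{i_0}\in\mathcal I$; I claim $e_j\in\mathcal I$ for every $j\in I$. Given $j$, since $[i_0]=I$ there is a connection $\{X_1,\dots,X_t\}$ from $i_0$ to $j$; writing $\mathfrak A_0:=\{\widetilde{i_0}\}$ and $\mathfrak A_m:=\phi(\mathfrak A_{m-1},X_m)$ as in Definition \ref{defco}, each $\mathfrak A_m$ is non-empty and $\mathfrak A_m=\overline{\mathfrak A_m}$ for $m\geq 1$ by \eqref{bar}. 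I would prove by induction on $m$ that $e_l\in\mathcal I$ for every $l\in\mathfrak A_m\cap I$. For the inductive step, take $l\in\mathfrak A_m\cap I$; by \eqref{barro} there is $p\in\mathfrak A_{m-1}$ with $l\in\mu(p,a_{m,2},\dots,a_{m,n})$. Now $p\in\mathfrak A_{m-1}\subset I\,\dot\cup\,\overline I$, and bar-symmetry of $\mathfrak A_{m-1}$ with the induction hypothesis (for $m\geq2$), respectively $\mathfrak A_0=\{\widetilde{i_0}\}$ and $e_{i_0}\in\mathcal I$ (for $m=1$), show that the unbarred representative of $p$ indexes a basis vector lying in $\mathcal I$; since $u_p=e_p+e_{\overline p}$ equals that vector, $u_p\in\mathcal I$. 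Since moreover each of $u_{a_{m,2}},\dots,u_{a_{m,n}}$ is a basis vector or equals $\mathbb V$, hence lies in $\frak L$, the $\mu$-quasi-multiplicativity applied to $l\in\mu(p,a_{m,2},\dots,a_{m,n})$ gives some $\sigma\in\mathbb S_n$ with
\[
e_l\in\mathbb F\langle u_p,u_{a_{m,2}},\dots,u_{a_{m,n}}\rangle_\sigma\subset\langle\mathcal I,\frak L,\dots,\frak L\rangle_\sigma\subset\mathcal I,
\]
the last inclusion being the color gLt-ideal property. As $j\in\mathfrak A_t\cap I$, we get $e_j\in\mathcal I$.

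Hence $\mathbb W=\bigoplus_{j\in I}\mathbb F e_j\subset\mathcal I$. If $\mathbb V=\{0\}$ then $\mathcal I=\frak L$; otherwise tightness gives $\mathbb V=\sum\mathbb F\langle e_{i_1},\dots,e_{i_n}\rangle$, the sum over tuples with $\mu(i_1,\dots,i_n)=\{v\}$, and for each such tuple $e_{i_1}\in\mathbb W\subset\mathcal I$, whence $\langle e_{i_1},e_{i_2},\dots,e_{i_n}\rangle\in\langle\mathcal I,\frak L,\dots,\frak L\rangle\subset\mathcal I$; thus $\mathbb V\subset\mathcal I$ and $\mathcal I=\frak L$, so $\frak L$ is minimal. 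The delicate step is the inductive lifting above: the relation $l\in\mu(p,a_{m,2},\dots,a_{m,n})$ arises in several barred and unbarred shapes, according to which slot of a basis product $e_l$ was read off from, and one must check each of them yields the displayed product. This is precisely where the conventions $e_{\overline i}=0$ and $u_{\overline q}=e_{\overline q}+e_q=e_q$ of Definition \ref{defmulti} do the work, forcing the barred steps of a connection to propagate membership in $\mathcal I$ exactly as the unbarred ones do.
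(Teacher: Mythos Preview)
Your proof is correct and follows essentially the same approach as the paper's: the forward direction uses that $\frak J_{[i]}$ is a proper inherited ideal when $[i]\neq I$ (you phrase it contrapositively, the paper directly), and the converse propagates membership of basis vectors in $\mathcal I$ along a connection via $\mu$-quasi-multiplicativity, then invokes tightness of $\mathbb V$ to conclude $\mathcal I=\frak L$. Your inductive bookkeeping with $\mathfrak A_m$ and the explicit handling of the barred/unbarred cases through $u_p=e_p+e_{\overline p}$ is exactly the mechanism the paper uses (somewhat less explicitly) in its two ``possibilities'' for $\tilde{i_0}$.
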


\begin{proof}
If $\frak L$ is minimal, for the color gLt-ideals defined in Proposition \ref{teo1} we have $\frak{J}_{[i]} =\frak  L$ for any $[i].$ Hence, $[i] = I$. To prove the converse, consider ${\frak J}$ a non-zero color gLt-ideal of $\frak L$ admitting a basis inherited by the one of $\frak L$. Since ${\frak J} \neq 0$, we can take some $i_0 \in I$ such that
\begin{equation}\label{ass}
0 \neq e_{i_0} \in {\frak J}.
\end{equation}
Taking into account that $I$ has all of its elements connected, we have that for any $i \in I$, we can consider a connection
\begin{equation}\label{5}
\{a_{1,2},\dots,a_{1,n},a_{2,2},\dots, a_{2,n},\dots,a_{t,2},\dots, a_{t,n}\}
\end{equation}
from $i_0$ to $i$, being $t \geq 1$. We know by Equation \eqref{bar} that $$\phi(\{\tilde{i_0}\}, a_{1,2},\dots,a_{1,n}) \cap I \neq \emptyset$$ and so for any $j_1 \in \phi(\{\tilde{i_0}\}, a_{1,2},\dots,a_{1,n}) \cap I$ we have, taking into account $\tilde{i_0} \in \{i_0, \overline{i_0}\}$ that either $j_1 \in \mu (i_0, a_{1,2},..., a_{1,n}) \setminus \{v\}$ or $j_1 \in \mu (\overline{i_0}, a_{1,2},..., a_{1,n}) \setminus \{v\}$, being necessarily any $a_{1,k} \in {\frak I}$ in the second possibility. By Equation (\ref{ass}) we get in the first possibility that $0 \neq e_{j_1} \in {\mathbb F} \langle  e_{i_0},u_{a_{1,2}}, \dots, u_{a_{1,n}} \rangle_{\sigma} \subset {\frak I}$ for some $\sigma \in \mathbb S_n$, and with $u_{a_{1,k}}=e_{a_{1,k}} + \overline{e_{a_{1,k}}}$ if $a_{1,k} \in I \cup \overline{I}$ or $u_{a_{1,k}}=\mathbb V$ if $a_{1,k} \in \{v, \overline{v}\}.$ In the second possibility, we get by
Equation (\ref{ass}) and the $\mu$-quasi-multiplicativity of ${\mathfrak B}$ that $e_{j_1} \in {\mathbb F} \langle  e_{i_0},u_{a_{1,2}}, \dots, u_{a_{1,n}} \rangle_{\sigma} \subset {\frak I}$ for some $\sigma \in\mathbb S_n$, where
$u_{a_{1,k}}=e_{a_{1,k}}$ if $a_{1,k} \in I$ or $u_{a_{1,k}}=\mathbb V$ if $a_{1,k} =v.$

 Hence we can assert
\begin{equation}\label{7}
\bigoplus\limits_{j \in \phi(\{i_0\}, a_{1,2}, \dots, a_{1,n}) \cap I} {\mathbb F} e_j\subset {\frak J}.
\end{equation}
Since $$\phi(\phi(\{i_0\},a_{1,2},\dots,a_{1,n}),a_{2,2},\dots,a_{2,n}) \cap I \neq \emptyset,$$ we can argue as above, taking into account Equation \eqref{7}, to get
$$\bigoplus\limits_{j \in \phi(\phi(\{i_0\},a_{1,2},\dots,a_{1,n}),a_{2,2},\dots,a_{2,n})\cap I} {\mathbb F} e_j \subset {\frak J}.$$
By reiterating this process with the connection \eqref{5} we obtain $$\bigoplus\limits_{j \in \phi(\phi (\ldots \phi(\{i_0\}, a_{1,2}, \dots, a_{1,n}), \ldots), a_{t,2},\dots, a_{t,n}) \cap I} {\mathbb F} e_j \subset {\frak J}.$$ Taking now into account $i \in \phi(\phi (\ldots \phi(\{i_0\},a_{1,2},\dots,a_{1,n}), \ldots), a_{t,2},\dots, a_{t,n}) \cap I$ we conclude $e_i \in {\frak J}$ and so
\begin{equation}\label{b1}
\mathbb W = \bigoplus\limits_{i \in I} {\mathbb F}e_i \subset {\frak J}.
\end{equation}
Taking now into account that $\mathbb V$ is tight, Equation \eqref{b1} allows us to assert
\begin{equation}\label{b2}
\mathbb V \subset {\frak J}.
\end{equation}
Finally, since $\frak L = \mathbb V \oplus \mathbb W$, Equations \eqref{b1} and \eqref{b2} give us ${\frak J}=\frak L$.
\end{proof}

\begin{theorem}
Suppose $\frak L$ admits a $\mu$-quasi-multiplicative basis. If $\frak L$ is centerless and with $\mathbb V$ tight then 
$$\frak L = \bigoplus\limits_{k} {\frak J}_k$$ is the direct sum of the family of its minimal color gLt-ideals, each one admitting a $\mu$-quasi-multiplicative basis inherited by the one of $\frak L$.
\end{theorem}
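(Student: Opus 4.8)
The plan is to combine the structural decomposition from Corollary \ref{co1} with the minimality criterion from Theorem \ref{last}. Since $\frak L$ is centerless and $\mathbb V$ is tight, Corollary \ref{co1} already gives us the direct sum decomposition $\frak L = \bigoplus_{[i] \in I/\sim} {\frak J}_{[i]}$, and Theorem \ref{teo2} (together with Proposition \ref{teo1}) tells us each ${\frak J}_{[i]}$ is a color gLt-ideal admitting a quasi-multiplicative basis inherited by the one of $\frak L$. So the only substantive content remaining is to show that each summand ${\frak J}_{[i]}$ is \emph{minimal} and that it carries a \emph{$\mu$-quasi-multiplicative} basis inherited from that of $\frak L$.

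First I would fix $[i] \in I/\sim$ and write ${\frak J}_{[i]} = \mathbb V_{[i]} \oplus \mathbb W_{[i]}$ with $\mathbb W_{[i]} = \bigoplus_{j \in [i]} \mathbb F e_j$. To apply Theorem \ref{last} to ${\frak J}_{[i]}$ in place of $\frak L$, I need three things: that ${\frak J}_{[i]}$ admits a $\mu$-quasi-multiplicative basis, that its analogue of $\mathbb V$, namely $\mathbb V_{[i]}$, is tight, and that its index set $[i]$ has all elements connected \emph{within ${\frak J}_{[i]}$}. The connectedness of $[i]$ inside ${\frak J}_{[i]}$ is essentially immediate: by construction any two elements of $[i]$ are connected in $\frak L$, and since $\langle {\frak J}_{[i]}, {\frak J}_{[h]}, \frak L, \dots, \frak L\rangle_\sigma = 0$ for $[h] \neq [i]$, the operations $a_\sigma, b_\sigma, \mu, \phi$ computed inside ${\frak J}_{[i]}$ agree with those computed in $\frak L$ when all inputs come from $[i] \dot\cup \{v\}$ and their barred copies — here one uses that ${\frak J}_{[i]}$ is an ideal so products of its basis elements with elements of $\mathbb V$ or $\mathbb W_{[i]}$ stay inside ${\frak J}_{[i]}$, hence a connection in $\frak L$ between elements of $[i]$ can be taken (restricting the intermediate $\phi$-sets) to lie entirely in $[i]$. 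Tightness of $\mathbb V_{[i]}$ follows from its very definition, $\mathbb V_{[i]} = \bigl(\sum_{i_1,\dots,i_n \in [i]} \mathbb F \langle e_{i_1},\dots,e_{i_n}\rangle\bigr) \cap \mathbb V$, once one checks this coincides with $\sum_{i_1,\dots,i_n \in [i],\, \mu(i_1,\dots,i_n)=\{v\}} \mathbb F\langle e_{i_1},\dots,e_{i_n}\rangle$, which holds because a product $\langle e_{i_1},\dots,e_{i_n}\rangle$ lands in $\mathbb V$ precisely when $v \in \mu(i_1,\dots,i_n)$. For the $\mu$-quasi-multiplicative property of ${\frak J}_{[i]}$: given $j \in [i]$ and indices $k_1,\dots,k_n$ from the relevant domain built over $[i] \dot\cup \{v\}$ with $j \in \mu(k_1,\dots,k_n)$, the $\mu$-quasi-multiplicativity of $\frak L$ produces $e_j \in \mathbb F\langle u_{k_1},\dots,u_{k_n}\rangle_\sigma$ with the $u_{k_r}$ either basis elements $e_{k_r} + e_{\overline{k}_r}$ or $\mathbb V$; since the basis elements involved have indices in $[i]$ and the product lies in the ideal ${\frak J}_{[i]}$, this same witness works inside ${\frak J}_{[i]}$ — one must only note that $\mathbb V$ appearing as a factor is harmless because $\langle \mathbb W_{[i]}, \dots, \mathbb V, \dots\rangle \subset {\frak J}_{[i]}$ and that $e_{\overline{k}_r} = 0$ by convention, so the basis is inherited as required. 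Having verified the three hypotheses, Theorem \ref{last} applied to ${\frak J}_{[i]}$ yields that ${\frak J}_{[i]}$ is minimal.

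Finally I would assemble the pieces: rename the family $\{{\frak J}_{[i]} : [i] \in I/\sim\}$ as $\{{\frak J}_k\}_k$, so that $\frak L = \bigoplus_k {\frak J}_k$ is a direct sum of minimal color gLt-ideals, each admitting a $\mu$-quasi-multiplicative basis inherited by the one of $\frak L$, which is the assertion. The main obstacle I anticipate is the bookkeeping needed to verify that the combinatorial operations $\mu$ and $\phi$ \emph{restrict} correctly to ${\frak J}_{[i]}$ — i.e. that computing connections and $\mu$-multiplicativity ``internally'' to the ideal gives the same answer as computing them in $\frak L$. This is where the ideal property of ${\frak J}_{[i]}$ (Proposition \ref{teo1}) and the orthogonality $\langle {\frak J}_{[i]}, {\frak J}_{[h]}, \frak L,\dots\rangle_\sigma = 0$ (Theorem \ref{teo2}, Lemma \ref{lema1'}) do the real work, ensuring no ``leakage'' of indices outside $[i]$ in any connection between elements of $[i]$; everything else is a routine transfer of the earlier definitions. \qed
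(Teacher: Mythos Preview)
Your proposal is correct and follows essentially the same approach as the paper: invoke Corollary \ref{co1} for the direct sum decomposition, then apply Theorem \ref{last} to each ${\frak J}_{[i]}$ after checking that it inherits a $\mu$-quasi-multiplicative basis, that $\mathbb V_{[i]}$ is tight, and that $[i]$ has all its elements connected internally. In fact you supply more justification for these three verifications than the paper does (it dispatches them with ``clearly'' and ``easy to verify''), so there is no gap.
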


\begin{proof}
By Corollary \ref{co1} we have that $\frak L = \bigoplus_{[i] \in I/\sim} {\frak J}_{[i]}$ is the direct sum of the color gLt-ideals ${\frak J}_{[i]}.$

\noindent We wish to apply Theorem \ref{last} to any ${\frak J}_{[i]}$, so we have to verify that $${\frak J}_{[i]} =\mathbb V_{[i]} \oplus\mathbb W_{[i]}$$ admits a $\mu$-quasi-multiplicative basis, $\mathbb V_{[i]}$ is tight
and the basis $\{e_i : i \in [i]\}$ of $\mathbb W_{[i]}$ satisfies that all of the elements in the index set $[i]$ are $[i]$-connected (connected through connections contained in $([i] \dot{\cup} v)
\dot{\cup}(\overline{[i]} \dot{\cup} \overline{v})$).

We clearly have that ${\frak J}_{[i]}$ admits a $\mu$-quasi-multiplicative basis as consequence of having a basis inherited from the one of $\frak L$ and that the linear space $\mathbb V_{[i]} $ is tight by construction.

Finally, since it is easy to verify that $[i]$ has all of its elements $[i]$-connected we can apply Theorem \ref{last} to any ${\frak J}_{[i]}$ so as to conclude ${\frak J}_{[i]}$ is minimal. It is clear that the decomposition $\frak L = \bigoplus_{[i] \in
I/\sim} {\frak J}_{[i]}$ satisfies the assertions of the theorem.
\end{proof}

\medskip

%{\bf Acknowledgment.} The author would like to thank the referee for his exhaustive  review of the paper as well as his suggestions which have helped to improve the work.


\begin{thebibliography}{99}
\bibitem{m7} Babych, V.; Golovashchuk, N.; Ovsienko, S.; Generalized multiplicative bases for one-sided bimodule problems. {\it Algebra Discrete Math.} {\bf 12} (2011), no. 2, 1-24.



\bibitem{bai1}
Bai, R.; Zhang, Y.; Homogeneous Rota-Baxter operators on the $3$-Lie algebra $A_{\infty}$ (II).
{\it  Colloq. Math.}  {\bf 149} (2017), no. 2, 193-209.

\bibitem{bai2}
Bai, R.; Zhang, Y.; Homogeneous Rota-Baxter operators on the $3$-Lie algebra $A_{\infty}$.
{\it  Colloq. Math.}  {\bf  148} (2017), no. 2, 195-213




\bibitem{kmod}
Barreiro, E.; Kaygorodov, I.; Sánchez, J. M.;
$k$-Modules over linear spaces by $n$-linear maps admitting a multiplicative basis,
arXiv:1707.07483


\bibitem{m5} Bautista, R.; Gabriel, P.; Roiter, A.V.; Salmeron, L.; Representation-finite algebras and multiplicative basis. {\it Invent. math.} {\bf 81} (1985), 217-285.

\bibitem{beites18}
Beites, P.; Kaygorodov, I.; Popov, Yu.;
Generalized derivations of multiplicative $n$-ary Hom-$\Omega$ color algebras,
{\it Bulletin of the Malaysian Mathematical Sciences Society}. {\bf 41} (2018),
DOI: 10.1007/s40840-017-0486-8

\bibitem{poj08patricia}  Beites, P.; Pozhidaev, A.;
On simple Filippov superalgebras of type $A(n,n)$.
{\it Asian-Eur. J. Math.} {\bf 1} (2008), no. 4, 469-487.


\bibitem{vitya00} Bovdi, V.:   On a filtered multiplicative basis of group algebras. {\it Arch.  Math. (Basel).} {\bf  74} (2000),  no. 2, 81--88.


\bibitem{vitya03} Bovdi, V.; On a filtered multiplicative bases of group algebras. II. {\it Algebr. Represent. Theory} {\bf 6} (2003), no. 3, 353--368.

\bibitem{bgs11} Bovdi, V.; Grishkov, A.; Siciliano, S.; Filtered multiplicative bases of restricted enveloping algebras. {\it Algebr. Represent. Theory} {\bf 14} (2011), no. 4, 601--608.

\bibitem{bgs15} Bovdi, V.; Grishkov, A.; Siciliano, S.; On filtered multiplicative bases of some associative algebras. {\it Algebr. Represent. Theory.} {\bf 18} (2015), no. 2,  297--306.

\bibitem{Yo1} Calder\'on, A.J.; On the structure of graded Lie algebras. {\it J. Math. Phys.} {\bf 50} (2009), no. 10, 103513, 8 pp.

\bibitem{Yo5} Calder\'on, A.J.; On simple split Lie triple systems. {\it Algebr. Represent. Theory} {\bf 12} (2009), no. 2-5, 401--415.

\bibitem{Yo4} Calder\'on, A.J.; On graded associative algebras. {\it Rep. Math. Phys.} {\bf  69} (2012), no. 1, 75--86.

\bibitem{cuasi} Calder\'on, A.J.; Associative algebras admitting a quasi-multiplicative basis. {\it Algebr. Represent. Theory} {\bf 17} (2014), no. 6, 1889-1900.

\bibitem{Yo_Arb_Alg} Calder\'on, A.J.; Navarro, F.J.; Arbitrary algebras with a multiplicative basis. {\it Linear Algebra Appl.} {\bf 498} (2016), no. 1, 106--116.

\bibitem{Yo_modules} Calder\'on, A.J.; Navarro, F.J.; S\'anchez, J.M.; Modules over linear spaces admitting a multiplicative basis. {\it Lin.  Mult.  Algebra} {\bf 65} (2017), no. 1, 156--165.

\bibitem{Yo_Arb_Triple} Calder\'on, A.J.; Navarro, F.J.; S\'{a}nchez, J.M.; Arbitrary triple systems admitting a multiplicative basis. {\it Comm. Algebra} {\bf 45} (2017), no. 3, 1203--1210.

\bibitem{Yo_n_algebras} Calder\'on, A.J.; Navarro, F.J.; S\'anchez, J.M.; $n$-Algebras admitting a multiplicative basis. {\it J. Algebra Appl.} {\bf 16} (2018), no. 11, 1850025 (11 pages).

\bibitem{Yo3} Calder\'on, A.J.; S\'{a}nchez, J.M.; Split Leibniz algebras. {\it Linear Algebra Appl.} {\bf 436} (2012), no. 6, 1648--1660.

\bibitem{ck10}   Cantarini, N.; Kac, V.;
Classification of simple linearly compact $n$-Lie superalgebras, {\it Comm. Math. Phys.} {\bf 298} (2010), no. 3, 833--853.

\bibitem{m1} De la Mora, C.; Wojciechowski, P.J.;  Multiplicative bases in matrix algebras. {\it Linear Algebra Appl.} {\bf 419} (2006), no. 2-3, 287--298.

\bibitem{dzhuma}
 Dzhumadildaev, A.;
Cohomologies of colour Leibniz algebras: pre-simplicial approach,
{\it Lie theory and its applications in physics}, III (Clausthal, 1999), 124–136, World Sci. Publ., River Edge, NJ, 2000.

\bibitem{m6} Green, E.L.; Multiplicative bases, Gr${\rm \ddot{o}}$bner bases, and right Gr${\rm \ddot{o}}$bner bases.  {\it J. Symbolic Comput.} {\bf 29} (2000), no. 4-5, 601--623.

\bibitem{ivan09}
Kaygorodov, I.; On $\delta$-derivations of classical Lie superalgebras.
{\it Sib. Math. J.} {\bf 50} (2009), no. 3, 434--449. 

\bibitem{ivan12}
Kaygorodov, I.; $\delta$-superderivations of semisimple finite-dimensional Jordan superalgebras. {\it Math. Notes} {\bf 91} (2012), no. 1-2, 187--197.

\bibitem{color n-ary} Kaygorodov, I.; Popov, Yu.; Generalized derivations of (color) $n$-ary algebras. {\it Linear and Multilinear Algebra} {\bf 64} (2016), no. 6, 1086--1106.

\bibitem{Kochetov} Kochetov, M.; Gradings on finite-dimensional simple Lie algebras. {\it Acta Appl. Math.}{\bf 108} (2009), no. 1, 101--127.

\bibitem{m3} Kupisch, H.; Waschbusch, J.; On multiplicative basis in quasi-Frobenius algebras. {\it Math. Z.} {\bf 186} (1984), 401--405.

\bibitem{mikh85}  Mikhalev, A.; Subalgebras of free colored Lie superalgebras. {\it Mathematical Notes}. {\bf 37} (1985),  no. 5, 356--360.



\bibitem{poj3} 
Pozhidaev, A. P.; Monomial $n$-Lie algebras, {\it Algebra and Logic} 37 (1998), no. 5, 307-322.

\bibitem{poj1}
Pozhidaev, A. P.; On simple $n$-Lie algebras, {\it Algebra and Logic} 38 (1999), no. 3, 181-192.


\bibitem{poj03}  Pojidaev, A.; On simple Filippov superalgebras of type $B(0,n)$.
{\it J. Algebra Appl.} {\bf 2} (2003),  no. 3, 335--349.

\bibitem{poj09} Pojidaev, A.;  Saraiva, P.; On simple Filippov superalgebras of type $B(0,n).$ II.
{\it Port. Math.} {\bf 66} (2009),  no. 1, 115--130.

\bibitem{poj08}  Pozhidaev, A.; Simple Filippov superalgebras of type $B(m,n).$ {\it Algebra and Logic}. {\bf 47} (2008),  no. 2, 139--152.




\bibitem{ZT15} Zhang T.,
Cohomology and deformations of 3-Lie colour algebras,
{\it Linear Multilinear Algebra}. {\bf 63} (2015), no. 4, 651--671.


\end{thebibliography}
\end{document}